 \title{Orthogonality of matrices in the Ky Fan $k$-norms\\ \small{Priyanka Grover \thanks{Email:  priyanka.grover@snu.edu.in}\thanks{ The author is supported by the research grant of INSPIRE Faculty Award of Department of Science and Technology, India.}\\ {\em Department of Mathematics, School of Natural Sciences, Shiv Nadar University, Gautam Buddha Nagar, Uttar Pradesh 201314, India}}}
\date{}
\newcommand{\Wk}{\mathcal W}
\newcommand{\W}{\mathscr W}
\newcommand{\X}{\mathscr X}
\newcommand{\Y}{\mathscr Y}
\newcommand{\mat}{\mathbb{M}(n)}
\newcommand{\C}{\mathbb{C}}
\newcommand{\R}{\mathbb{R}}
\newcommand{\tr}{\mathop{{\rm tr}}}
\newcommand{\conv}{\mathop{{\rm conv}}}
\newcommand{\h}{\mathop{{\rm Re}}}
\newtheorem{theorem}{Theorem}[section]
\theoremstyle{definition}
\newtheorem{defn}[theorem]{Definition}
\newtheorem{corollary}[theorem]{Corollary}
\newtheorem{prop}[theorem]{Proposition}
\newcommand{\Hil}{\mathcal{H}}
\newtheorem{lemma}[theorem]{Lemma}
\begin{document}
\maketitle
\begin{abstract}
We obtain necessary and sufficient conditions for a matrix $A$ to be Birkhoff-James orthogonal to another matrix $B$ in the Ky Fan $k$-norms. A characterization for $A$ to be Birkhoff-James orthogonal to any subspace $\W$ of $\mat$ is also obtained.
\end{abstract}

\textit{AMS classification: } {15A60, 47A12, 47A30, 15A18}

\textit{Keywords: } {Birkhoff-James orthogonality, Subdifferential,  Singular value decomposition,  Ky Fan norms, $k$-numerical range, Hausdorff-Toeplitz Theorem, Separating Hyperplane theorem, Norm parallelism}

\numberwithin{theorem}{section}
\numberwithin{equation}{section}

\section{Introduction}
Let $\mat$ be the space of $n\times n$ complex matrices. Let $\|\cdot\|$ be any norm on $\mat$. Let $A, B\in \mat$. Then $A$ is said to be (Birkhoff-James) orthogonal to $B$ in $\|\cdot\|$ if \begin{equation}\|A+\lambda B\|\geq \|A\| \text{ for all }\lambda\in \C.\label{0.8}\end{equation}
In \cite{bhatiasemrl}, Bhatia and \v{S}emrl obtained a characterization for $A$ to be orthogonal to $B$ in the operator norm (also known as the spectral norm) $\|\cdot\|_{\infty}$. They showed that $A$ is orthogonal to $B$ in $\|\cdot\|_{\infty}$ if and only if there exists a unit vector $x \in \C^n$ such that $\|Ax\|=\|A\|_{\infty}$ and $\langle Ax, Bx \rangle=0.$  (All inner
products in this note are conjugate linear in the first component and linear in the second component.) Different proofs for this result have been studied in  \cite{tbgrover,keckic,lischneider}. This result can be restated as follows. If $A=U|A|$ is a polar decomposition of $A$, then $A$ is orthogonal to $B$ in $\|\cdot\|_{\infty}$ if and only if there exists a unit vector $x \in \C^n$ such that $|A|x=\|A\|_{\infty}x$ and $\langle x, U^*Bx \rangle=0.$ 
In \cite{bhatiasemrl}, it was also showed that if $\tr U^* B=0$ then $A$ is orthogonal to $B$ in the trace norm $\|\cdot\|_{1}$. And the converse is true if $A$ is taken to be invertible. 
Later, Li and Schneider \cite{lischneider} gave a characterization for orthogonality in $\|\cdot\|_{1}$ when $A$ need not be necessarily invertible. They showed the following. Let the number of  zero singular values of $A$ be $\ell$. Let $A=US V^*$ be a singular value decomposition of $A$. Let $B=U\left[\begin{array}{ccc} B_{11} &B_{12} \\ B_{21} & B_{22}  \end{array}\right]V^*,  \text{ where } B_{11}\in \mathbb M(n-\ell),  B_{22}\in \mathbb M(\ell).  $  Then
$\|A+\lambda B\|_{1}\geq \|A\|_{1}\text{ for all }\lambda \in \C$ if and only if $|\tr B_{11}|\leq \| B_{22}\|_1.$ 

The trace norm and the operator norm are special cases of two classes of norms, namely the Schatten $p$-norms $\|\cdot\|_p$ and the Ky Fan $k$-norms $\|\cdot\|_{(k)}$. In \cite{bhatiasemrl} and \cite{lischneider}, the authors have investigated the problem of finding necessary and sufficient conditions for orthogonality of matrices in $\|\cdot\|_p$, $1\leq p\leq \infty$. In this note, we obtain characterizations for orthogonality of matrices in $\|\cdot\|_{(k)}$, $1\leq k \leq n$.  Let $s_1(A)\geq s_2(A)\geq \cdots \geq s_n(A)\geq 0$ be the singular values of $A$. Then $\|A\|_{(k)}$ is defined as\begin{equation}\|A\|_{(k)}=s_1(A)+s_2(A)+\cdots+s_k(A).\end{equation} The cases $k=1$ and $k=n$ correspond to the operator norm $\|\cdot\|_{\infty}$ and the trace norm $\|\cdot\|_{1}$, respectively. We show the following.

\begin{theorem}\label{thm 1}
Let $A=U |A|$ be a polar decomposition of $A$.  If
there exist $k$ orthonormal vectors $u_1,u_2,\ldots,u_k$ such that 
\begin{equation}
|A|\ u_i=s_i(A)u_i \text{ for all } 1\leq i\leq k\label{cond1}
\end{equation}
and 
\begin{equation}
\sum_{i=1}^k \langle u_i, U^* B u_i\rangle=0, \label{cond2}
\end{equation} then
$A$ is orthogonal to $B$ in $\|\cdot\|_{(k)}$.
If $s_k(A)>0$, then the converse is also true.
\end{theorem}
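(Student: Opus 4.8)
The plan is to prove the two implications separately, with sufficiency being elementary and the converse resting on convex duality. For sufficiency I would use the variational description of the Ky Fan norm, namely $\|M\|_{(k)}=\max \h\sum_{i=1}^k\langle x_i,My_i\rangle$ taken over orthonormal $k$-tuples $\{x_i\}$, $\{y_i\}$. Taking $U$ to be a unitary polar factor and feeding in the particular frames $x_i=Uu_i$ and $y_i=u_i$ (orthonormal because $U$ is unitary), one obtains for every $\lambda\in\C$ that $\|A+\lambda B\|_{(k)}\geq \h\sum_{i=1}^k\langle Uu_i,(A+\lambda B)u_i\rangle$. Since $Au_i=s_i(A)Uu_i$ by \eqref{cond1}, the $A$-part contributes exactly $\sum_{i=1}^k s_i(A)=\|A\|_{(k)}$, while the $B$-part equals $\h\bigl(\lambda\sum_{i=1}^k\langle u_i,U^*Bu_i\rangle\bigr)=0$ by \eqref{cond2}. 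Hence $\|A+\lambda B\|_{(k)}\geq\|A\|_{(k)}$ for all $\lambda$, which is the asserted orthogonality.

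For the converse I would pass to convex analysis on $\mat$ viewed as a real inner-product space under $\langle X,Y\rangle=\h\tr(X^*Y)$. Orthogonality of $A$ to the complex line $\C B$ is equivalent to $0$ being a minimizer of the convex map $\lambda\mapsto\|A+\lambda B\|_{(k)}$, hence to the directional-derivative inequalities $\max_{G\in\partial\|A\|_{(k)}}\h\bigl(e^{i\theta}\tr(G^*B)\bigr)\geq 0$ for every $\theta$. Writing $S=\{\tr(G^*B):G\in\partial\|A\|_{(k)}\}\subseteq\C$, which is convex and compact, these inequalities state precisely that no direction strictly separates the origin from $S$; by the separating hyperplane theorem this is equivalent, for convex compact $S$, to $0\in S$. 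Thus the problem reduces to finding a subgradient $G$ of $\|\cdot\|_{(k)}$ at $A$ with $\tr(G^*B)=0$.

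The crux is therefore to compute $\partial\|A\|_{(k)}$, and I expect this to be the main technical obstacle. I would do it by duality: $\|A\|_{(k)}=\max\{\h\tr(G^*A):\|G\|_\infty\leq 1,\ \|G\|_1\leq k\}$, so with $H=U^*G$ and $|A|=\sum_i s_i(A)\,p_ip_i^*$ one maximizes $\h\tr(H^*|A|)=\sum_i s_i(A)\,\h\langle p_i,Hp_i\rangle$ subject to $s_j(H)\leq 1$ and $\sum_j s_j(H)\leq k$. Von Neumann's trace inequality bounds this by $\sum_{i=1}^k s_i(A)$, and analysing its equality case shows the maximizers are exactly $H=P+Q$, where $P$ is the spectral projection of $|A|$ onto the eigenvalues exceeding $s_k(A)$, of some rank $r$, and $Q$ is any positive contraction supported on the eigenspace $\E$ of $s_k(A)$ with $\tr Q=k-r$; thus $G=U(P+Q)$. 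The delicate part here is forcing the off-diagonal blocks of $H$ and its components on the smaller singular directions to vanish, and pinning $Q$ to be positive with the correct trace. The hypothesis $s_k(A)>0$ enters precisely at this point: it is what rules out spurious subgradients supported on $\ker A$ and keeps the description above exact.

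Finally I would translate $\tr(G^*B)=0$ back into \eqref{cond1}--\eqref{cond2}. With $T=U^*B$ it reads $\sum_{i\leq r}\langle p_i,Tp_i\rangle+\tr\bigl(Q\,P_\E T P_\E\bigr)=0$, where $P_\E$ is the projection onto $\E$. As $Q$ ranges over positive contractions on $\E$ of trace $m:=k-r$, whose extreme points are the rank-$m$ projections of $\E$, the quantity $\tr(Q\,P_\E TP_\E)$ ranges over the convex hull of the $m$-numerical range $W_m(P_\E TP_\E)=\{\sum_{j=1}^m\langle\zeta_j,T\zeta_j\rangle:\{\zeta_j\}\subset\E\text{ orthonormal}\}$. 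By the generalized Hausdorff--Toeplitz theorem this range is already convex, so $0\in S$ forces $0$ to be attained at genuine orthonormal $\zeta_1,\dots,\zeta_m\in\E$. Choosing $u_1,\dots,u_r$ to be an orthonormal eigenbasis of the eigenspace for eigenvalues exceeding $s_k(A)$ and setting $u_{r+1},\dots,u_k:=\zeta_1,\dots,\zeta_m$ produces orthonormal vectors satisfying \eqref{cond1}, and the displayed identity becomes exactly $\sum_{i=1}^k\langle u_i,U^*Bu_i\rangle=0$, which is \eqref{cond2}. It is the convexity of the numerical range that upgrades ``$0$ lies in a convex hull'' to ``$0$ is attained by an honest $k$-frame'', and so it is indispensable to the clean form of the statement.
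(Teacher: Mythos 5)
Your proposal is correct, but your converse follows a genuinely different route from the paper's. For sufficiency you and the paper do the same thing: feed the frame $(Uu_i,u_i)$ into the variational formula \eqref{fan}. For the converse, the paper first establishes its own description of the subdifferential, $\partial g(A)=\conv\{\sum_{i=1}^k u_iv_i^*\}$ over singular-vector frames (Theorem \ref{thm subdiff ky fan}), obtained from the directional-derivative formula \eqref{limit}; it then proves a real-parameter version of the statement (Theorem \ref{thm 3.2}) using the convexity of $\mathcal W(U^*B,|A|)$ (Lemma \ref{lemma 2.1.7}), and finally upgrades to complex $\lambda$ by applying the real version to $e^{i\theta}B$ for every $\theta$ and separating. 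You instead reduce at once to the existence of $G\in\partial g(A)$ with $\tr(G^*B)=0$, describe $\partial g(A)$ in Watson's form $G=U(P+Q)$ with $Q$ a positive contraction of trace $q$ on the $s_k(A)$-eigenspace --- a description the paper imports as Theorem \ref{watson ky fan} and deploys only for Theorem \ref{thm 2} --- and then convert ``$0$ is attained by some $Q$'' into ``$0$ is attained by a rank-$q$ projection, i.e.\ by an orthonormal frame'' via the extreme points of $\{Q: 0\le Q\le I,\ \tr Q=q\}$ together with the convexity of the $q$-numerical range. Both arguments ultimately rest on the same generalized Hausdorff--Toeplitz convexity at the decisive step; yours makes the equivalence of Theorem \ref{thm 1} with Theorem \ref{thm 2}(a) explicit and dispenses with the real-then-complex two-step, at the price of having to carry out the von Neumann equality analysis behind Watson's formula (which you correctly flag as the technical crux and state correctly, including the role of $s_k(A)>0$ in killing contributions from $\ker A$, but do not prove). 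The paper's route costs more preparation but yields the formulas \eqref{limit} and \eqref{subdifferential}, which it reuses for Theorem \ref{thm 3}.
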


The next theorem gives a more general characterization. 

\begin{theorem}\label{thm 2}
  Let $A=US V^*$ be a singular value decomposition of $A$.  Let the multiplicity of $s_k(A)$ be $r+q$, where $r\geq 0$ and $q\geq 1$, such that
$$s_{k-q+1}(A)=\cdots=s_{k+r}(A).$$ Let $B=U\left[\begin{array}{ccc} B_{11} &B_{12} & B_{13}\\ B_{21} & B_{22} & B_{23} \\ B_{31} & B_{32} & B_{33} \end{array}\right]V^*, \text{ where } B_{11}\in \mathbb M(k-q),  B_{22}\in \mathbb M(r+q),  B_{33}\in \mathbb M(n-k-r).$
\begin{enumerate}
\item[(a)] Let $s_k(A)>0$. Then $A$ is orthogonal to $B$ in $\|\cdot\|_{(k)}$ if and only if there exists a positive semidefinite matrix $T\in \mathbb M(r+q)$ with $\lambda_1(T)\leq 1$ and $ \sum_{j=1}^{r+q} \lambda_j(T)=q $  such that 
  $\tr B_{11}+\tr (T^* B_{22})=0.$
\item[(b)] Let $s_k(A)=0$. Then $A$ is orthogonal to $B$ in $\|\cdot\|_{(k)}$ if and only if there exists $T\in \mathbb M(n-k+q,r+q)$ with $s_1(T)\leq 1$, and $\sum_{j=1}^{r+q} s_j(T)\leq q$
such that  $\tr B_{11}+\tr \left(T^* \left[\begin{array}{ccc}B_{22}\\ B_{32}\end{array}\right]\right)=0.$
\end{enumerate}
\end{theorem}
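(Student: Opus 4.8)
The plan is to reduce Birkhoff--James orthogonality to a linear condition on the subdifferential of the norm, exactly as one does for Theorem~\ref{thm 1}. Since $\lambda\mapsto\|A+\lambda B\|_{(k)}$ is convex on $\C\cong\R^2$, it attains its minimum at $\lambda=0$ if and only if $0$ lies in its subdifferential there. Writing $\|X\|_{(k)}=\max\{\h\,\tr(G^*X):G\in\mathcal D\}$, where $\mathcal D=\{G:\max\{s_1(G),\tfrac1k\|G\|_1\}\le1\}$ is the dual unit ball, and differentiating this support function, the minimum condition holds precisely when there is a \emph{norming matrix} $G$, i.e.\ $G\in\mathcal D$ with $\h\,\tr(G^*A)=\|A\|_{(k)}$, satisfying $\tr(G^*B)=0$. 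Thus the whole theorem rests on making the set $\partial\|A\|_{(k)}$ of norming matrices explicit.

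Next I would compute $\partial\|A\|_{(k)}$ using the SVD $A=USV^*$. Writing $G=UWV^*$ and using that $S$ is diagonal, the norming identity becomes $\sum_i s_i(A)\,\h(W_{ii})=\|A\|_{(k)}$, while $G\in\mathcal D$ means $s_1(W)\le1$ and $\|W\|_1\le k$. On the first block (indices $1,\dots,k-q$, where $s_i(A)>s_k(A)$) this forces $W_{ii}=1$, and since a contraction with a unit diagonal entry has the rest of that row and column equal to $0$, $W$ must be block diagonal with top block $I_{k-q}$ and vanishing third block, leaving a middle block $T$ of size $r+q$. The behaviour of $T$ splits into the two cases of the theorem. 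When $s_k(A)>0$, the middle singular values equal the positive number $s_k(A)$, so attaining $\|A\|_{(k)}$ forces the diagonal of $T$ (after diagonalising) to be nonnegative with sum $q$; moreover on a block of \emph{equal positive} singular values the left and right singular vectors are tied by a common unitary, which makes $T$ positive semidefinite with $\lambda_1(T)\le1$ and $\tr T=q$, giving part~(a). When $s_k(A)=0$ the middle block is the kernel/cokernel block (and the third block is absent), the left and right singular vectors there may be rotated independently, and the zero singular values contribute nothing to the norming sum; hence $T$ is an arbitrary matrix constrained only by $\mathcal D$, namely $s_1(T)\le1$ and $\sum_j s_j(T)\le q$, giving part~(b).

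With the subdifferential in hand, the translation is immediate: since $U^*BV=[B_{ij}]$ and $W$ is block diagonal, only the diagonal blocks survive in the trace, so $\tr(G^*B)=\tr(W^*U^*BV)=\tr B_{11}+\tr(T^*B_{22})$ in case~(a), and likewise $\tr B_{11}+\tr\!\left(T^*\left[\begin{array}{c}B_{22}\\ B_{32}\end{array}\right]\right)$ in case~(b) (where $B_{32}$ is in fact empty). Sufficiency is then a one-line estimate: for the exhibited $G$ one checks $G\in\mathcal D$ and $\h\,\tr(G^*A)=\|A\|_{(k)}$, whence $\|A+\lambda B\|_{(k)}\ge\h\,\tr(G^*(A+\lambda B))=\|A\|_{(k)}+\h(\lambda\,\tr(G^*B))=\|A\|_{(k)}$. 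For necessity I would argue by contraposition: the set $\Omega=\{\tr(G^*B):G\in\partial\|A\|_{(k)}\}\subseteq\C$ is compact and convex (a linear image of the compact convex subdifferential, in fact a translate of a $q$-numerical range of $B_{22}$, whose convexity is the Hausdorff--Toeplitz phenomenon); if $0\notin\Omega$ the separating hyperplane theorem produces a phase $e^{i\theta}$ with $\h(e^{i\theta}z)>0$ for all $z\in\Omega$, and differentiating $\|A+\lambda B\|_{(k)}$ at $0$ along $-e^{-i\theta}$ shows the norm strictly decreases, so $A$ is not orthogonal to $B$.

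The main obstacle I anticipate is the exact determination of $\partial\|A\|_{(k)}$, and in particular getting the two cases sharp: justifying that $T$ must be \emph{positive semidefinite} with $\tr T=q$ (an equality) when $s_k(A)>0$, versus an \emph{arbitrary} contraction with $\sum_j s_j(T)\le q$ (an inequality) when $s_k(A)=0$. Both features trace back to the non-uniqueness of the SVD on degenerate singular blocks---the rigidity of a common unitary on an equal \emph{positive} block against the independent rotations allowed on the kernel/cokernel---and to the fact that zero singular values drop out of the norming functional. Handling this carefully, together with identifying $\Omega$ with a convex numerical range so that the separation argument applies, is where the real work lies; the remaining steps are formal.
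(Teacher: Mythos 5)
Your overall route is the paper's own: orthogonality is converted into $0\in\partial(g\circ L)(0)=S^{*}\partial g(A)$ with $S(\lambda)=\lambda B$, and everything then hinges on an explicit description of the subdifferential of $\|\cdot\|_{(k)}$ at $A$; the final block computation $\tr B^{*}(U_1V_1^{*}+U_2TV_2^{*})=\tr B_{11}^{*}+\tr(B_{22}^{*}T)$ is exactly the paper's last step. The difference is that the paper does not derive the subdifferential description but quotes it as Theorem \ref{watson ky fan} (Watson), whereas you attempt to re-derive it. Note also that your separating-hyperplane argument for necessity is not needed here: $S^{*}\partial g(A)$ is literally the set $\{\tr(B^{*}G):G\in\partial g(A)\}$, so $0$ lies in it if and only if some norming $G$ annihilates $B$; no convexity-of-numerical-range input is required in Theorem \ref{thm 2}, unlike in Theorems \ref{thm 1} and \ref{thm 3}.

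The one place where your sketch would not survive being written out is the claim that $T$ must be positive semidefinite with $\tr T=q$ in case (a). The rigidity of the SVD on a degenerate positive block (a common unitary acting on $U_2$ and $V_2$) is not the mechanism; it only explains why the description does not depend on the chosen SVD. The actual argument is an equality chain in the dual-norm constraints. Writing $W=U^{*}GV$, the norming identity together with $s_1(W)\le1$ and $\|W\|_1\le k$ forces $W_{ii}=1$ for $i\le k-q$, hence $W=I_{k-q}\oplus W'$ with $\|W'\|_1\le q$; then $q=\h\tr T\le|\tr T|\le\sum_j|\lambda_j(T)|\le\sum_j s_j(T)\le\|W'\|_1\le q$, where $T$ is the leading principal block of $W'$ of size $r+q$. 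Equality throughout gives $\tr T=q$, all eigenvalues of $T$ of a common argument (hence nonnegative, since their sum is $q>0$), $T$ normal (so psd with $\lambda_1(T)=s_1(T)\le1$), and $\|T\|_1=\|W'\|_1$, which annihilates the remaining blocks of $W'$. In case (b) the relevant singular values vanish, so the norming identity imposes nothing on $T$ and only $s_1(T)\le1$, $\|T\|_1\le q$ survive, as you say. Either supply this equality-chain argument or, as the paper does, cite Watson's theorem; with that in place the rest of your proposal is correct.
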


Let $\W$ be any subspace of $\mat$. Then $A$ is said to be orthogonal to $\W$ (in the Birkhoff-James sense) in a given norm $\|\cdot\|$ on $\mat$ if \begin{equation}\|A+W\|\geq \|A\|  \text{ for all } W \in \W. \label{orthogonalitytoW}\end{equation}
In \cite{grover}, we obtained a necessary and sufficient condition for $A$ to be orthogonal to $\W$ in the operator norm. Our next theorem gives a characterization for $A$ to be  orthogonal to $\W$ in $\|\cdot\|_{(k)}$.
\begin{theorem}\label{thm 3}
Let $A=U |A|$ be a polar decomposition of $A$. Let $\W$ be any subspace of $\mat.$ If there exist density matrices $P_1,P_2,\ldots,P_k$ such that $\|\sum_{i=1}^k P_i\|_{\infty}\leq 1$, $|A|P_i=s_i(A) P_i \ (1\leq i\leq k)$ and $U \sum_{i=1}^k P_i \in \W^{\perp}$, then $A$ is orthogonal to $\W$ in $\|\cdot\|_{(k)}$. If $s_k(A)>0$, then the converse is also true. \end{theorem}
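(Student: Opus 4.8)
The plan is to reduce the subspace-orthogonality problem to a statement about the support functionals (subdifferential) of $\|\cdot\|_{(k)}$ at $A$, exactly as one does for a single matrix $B$, and then to read off the geometry of that set. Recall that every functional on $\mat$ has the form $X\mapsto \h\tr(G^*X)$ for a unique $G\in\mat$, that such a functional annihilates the complex subspace $\W$ precisely when $G\in\W^{\perp}$ (replace $W$ by $iW$ to pass from $\h\tr(G^*W)=0$ to $\tr(G^*W)=0$), and that the dual of the Ky Fan norm is $\|G\|_{(k)}^{*}=\max\{\|G\|_{\infty},\tfrac1k\|G\|_{1}\}$, so its unit ball is $\{G:\|G\|_{\infty}\le1,\ \|G\|_{1}\le k\}$. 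Thus the support functionals of $\|\cdot\|_{(k)}$ at $A$ are parametrized by
\[
\mathscr{S}(A)=\{G:\|G\|_{\infty}\le1,\ \|G\|_{1}\le k,\ \h\tr(G^*A)=\|A\|_{(k)}\}.
\]
The first thing I would prove is the duality bridge: $A$ is orthogonal to $\W$ in $\|\cdot\|_{(k)}$ if and only if $\mathscr{S}(A)\cap\W^{\perp}\ne\emptyset$.

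For the easy (if) direction of that bridge, given $G\in\mathscr{S}(A)\cap\W^{\perp}$ and any $W\in\W$, I would estimate $\|A+W\|_{(k)}\ge\h\tr\big(G^*(A+W)\big)=\h\tr(G^*A)+\h\tr(G^*W)=\|A\|_{(k)}+0$, using $\|G\|_{(k)}^{*}\le1$ and $G\in\W^{\perp}$. The only-if direction is where the Separating Hyperplane theorem enters: the set $\mathscr{S}(A)$ is nonempty, compact and convex, and $\W^{\perp}$ is a subspace, so if they were disjoint there would be a $W_0\in(\W^{\perp})^{\perp}=\W$ with $\h\tr(G^*W_0)<0$ for every $G\in\mathscr{S}(A)$; since $\max_{G\in\mathscr{S}(A)}\h\tr(G^*W_0)$ is the one-sided directional derivative of $\|\cdot\|_{(k)}$ at $A$ along $W_0$, this would force $\|A+tW_0\|_{(k)}<\|A\|_{(k)}$ for small $t>0$, contradicting $A\perp\W$ because $tW_0\in\W$.

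It then remains to identify $\mathscr{S}(A)$ concretely and match it with the density-matrix description. Writing $G=UP$ with $P=U^*G$ and using the polar decomposition $A=U|A|$, I would show (when $s_k(A)>0$, so that $U$ is isometric on the range of $|A|$) that $G\in\mathscr{S}(A)$ forces $P$ to be positive semidefinite, to commute with $|A|$, to act as the identity on the eigenspaces of $|A|$ for eigenvalues exceeding $s_k(A)$ and as $0$ on those below, and to satisfy $0\le P\le I$ with $\tr P=k$. The key quantitative input is von Neumann's trace inequality together with its equality case: from $\h\tr(G^*A)\le\sum_j s_j(G)s_j(A)$ and the constraints $s_j(G)\le1$, $\sum_j s_j(G)\le k$, equality with $\sum_{i=1}^k s_i(A)$ pins down both the weights and the alignment of the two singular systems. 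Once $P$ has this form, splitting it across the eigenspaces of $|A|$ and writing the block on the $s_k(A)$-eigenspace (a positive matrix with eigenvalues in $[0,1]$ and integer trace) as a sum of trace-one positive matrices yields density matrices $P_1,\ldots,P_k$ with $|A|P_i=s_i(A)P_i$ and $\|\sum_i P_i\|_{\infty}=\|P\|_{\infty}\le1$, so that $G=U\sum_i P_i$; conversely any such $G$ lies in $\mathscr{S}(A)$ by a direct computation ($\tr(G^*A)=\sum_i s_i(A)$, $\|G\|_{1}=\tr P=k$, $\|G\|_{\infty}=\|P\|_{\infty}\le1$). The if-direction of the theorem needs only this converse computation and does not require $s_k(A)>0$, while the only-if direction combines the duality bridge with the subdifferential description. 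I expect the main obstacle to be the subdifferential computation in the presence of a repeated smallest singular value $s_k(A)$: the equality analysis in von Neumann's inequality and the resulting freedom on the $s_k(A)$-eigenspace (captured by the operator-norm constraint $\|\sum_i P_i\|_{\infty}\le1$) is exactly what distinguishes this from the generic case, and one must verify that the prescribed decomposition into $k$ density matrices always exists.
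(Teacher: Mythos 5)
Your proposal is correct, but its converse direction takes a genuinely different route from the paper's. Both arguments share the same skeleton: orthogonality to $\W$ is reduced to the nonemptiness of $\partial g(A)\cap\W^{\perp}$ (your ``duality bridge'' is exactly what the paper extracts from Proposition \ref{p2.3} and Proposition \ref{subdiff chain rule}), and the forward direction is the same direct estimate against a functional of dual norm at most one. The divergence is in how elements of the subdifferential are described. The paper invokes Corollary \ref{cor 1.5.15}, which writes $\partial g(|A|)$ as the convex hull of sums $\sum_{i=1}^k u_iu_i^*$ of rank-one spectral projections; a convex combination $\sum_j t_j\sum_i u_i^{(j)}u_i^{(j)*}$ then hands over the density matrices $P_i=\sum_j t_j u_i^{(j)}u_i^{(j)*}$ with no further work, and $\bigl\|\sum_i P_i\bigr\|_\infty\le 1$ follows because each $\sum_i u_i^{(j)}u_i^{(j)*}$ is an orthogonal projection. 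You instead start from the dual-ball description \eqref{eq 1.4.1}, pin down the structure of a single $G=UP$ in $\partial g(A)\cap\W^{\perp}$ via the equality case of von Neumann's trace inequality, and then split $P$ across the eigenspaces of $|A|$ by hand. This is legitimate and self-contained --- it bypasses the directional-derivative machinery of Section \ref{preliminaries} entirely --- but the equality analysis is where the real work hides: to conclude $P\ge 0$, $[P,|A|]=0$, $P=I$ on eigenspaces above $s_k(A)$ and $P=0$ below, you need not only von Neumann's inequality and the rearrangement argument with $s_j(P)\le 1$, $\sum_j s_j(P)\le k$, but also the equality case of $\sum_i|p_{ii}|\le\|P\|_1$ to force positivity; and you must check that the trace-$q$ block $T$ on the $s_k(A)$-eigenspace splits into $q$ density matrices (e.g.\ $q$ copies of $T/q$) while the identity blocks split into rank-one projections, so that exactly $k$ matrices $P_i$ with $|A|P_i=s_i(A)P_i$ result. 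The paper's convex-hull formula absorbs all of this into the subdifferential computation done once in Section \ref{preliminaries}; your route trades that preparation for a sharper, more hands-on equality analysis at the point of use.
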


If $m_i(A)$ is the multiplicity of $s_i(A)$, then the condition $|A|P_i=s_i(A) P_i$ implies that the range of $P_i$ is a subspace of the eigenspace of $|A|$ corresponding to $s_i(A)$. So $\text{rank } P_i$ is at most $m_i(A)$. 

The problem of finding characterizations of orthogonality of a matrix to a subspace $\W$  of $\mat$ is closely related to the best  approximation problems \cite{singer}. A specific question is when is the zero matrix a best approximation to $A$ from $\W?$ This is the same as asking when is $A$ orthogonal to $\W$? 

In \cite{lischneider}, the authors studied a characterization for orthogonality in the induced matrix norms.  Ben\'itez, Fern\'andez and Soriano \cite{benitez} showed that a necessary and sufficient condition for the norm of a real finite dimensional normed space $\mathscr X$ to be induced by an inner product is that for any bounded linear operators $A,B$ from $\mathscr X$ into itself, $A$ is orthogonal to $B$ if and only if there exists a unit vector $x\in X$ such that $\|Ax\|=\|A\|$ and $\langle Ax, Bx\rangle=0$. More results in this direction have been obtained recently in \cite{sainpaul, sainpaulhait}. Characterizations of orthogonality on Hilbert C$^*$-modules have been studied in \cite{rajic1, rajic2, rajic3, tbgrover}.

To obtain the proofs of the above theorems, we use methods that we had introduced  in \cite{tbgrover} and \cite{grover}. 
We first obtain some new expressions for the subdifferential of the map taking a matrix $A$ to its Ky Fan $k$-norm $\|A\|_{(k)}$  in Section \ref{preliminaries}.  The proofs of the above theorems are given in Section \ref{proofs} followed by some remarks in Section \ref{remarks}.

\section{Subdifferentials of the Ky Fan $k$-norm}\label{preliminaries} 
Let $\X$ be a Banach space and let  $f:\X\rightarrow \R$ be a convex function.

\begin{defn}
 A subgradient of $f$ at $a \in \X$ is an element $\varphi$ of the dual space $ \X^*$ such that 
\begin{equation}f(y)-f(a)\geq \h \varphi(y-a) \quad \text{for all } y\in \X.\label{eq 1.3.1}\end{equation}
\end{defn}

The \emph{subdifferential} of $f$ at $a$  is the set of bounded linear functionals $\varphi \in \X^*$ satisfying \eqref{eq 1.3.1} and is denoted by $\partial f(a)$. It is a non-empty weak* compact convex subset of $\X^*$. For more details, see \cite[Chapter D]{hiriart} and \cite[Chapter 2]{zalinescu}.
The following proposition is a direct consequence of the definition of the subdifferential. It is one of the most useful tools that we require in Section \ref{proofs}.

\begin{prop}\label{p2.3}
A continuous convex function $f:\X\rightarrow \R$ attains its minimum value at $a$ if and only if $0\in \partial f(a)$.
\end{prop}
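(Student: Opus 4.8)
The plan is to show that this equivalence is an immediate unwinding of the definition of the subdifferential, so no deep machinery is required. First I would recall that, by the definition recorded above, $0\in\partial f(a)$ means exactly that the zero functional $\varphi=0$ satisfies the subgradient inequality \eqref{eq 1.3.1}, that is, $f(y)-f(a)\geq \h\, 0(y-a)$ for all $y\in\X$.

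Next I would observe that for the zero functional we have $0(y-a)=0$, and hence $\h\, 0(y-a)=0$ for every $y$. Substituting this into the displayed inequality shows that $0\in\partial f(a)$ is equivalent to $f(y)-f(a)\geq 0$, i.e.\ $f(y)\geq f(a)$, holding for all $y\in\X$. This last condition is precisely the statement that $f$ attains its minimum value at $a$. For the two directions I would simply read this equivalence both ways: if $0\in\partial f(a)$ then $f(y)\geq f(a)$ for all $y$, so $a$ is a minimizer; conversely, if $a$ is a global minimizer then $f(y)-f(a)\geq 0=\h\, 0(y-a)$ for all $y$, so the zero functional lies in $\partial f(a)$.

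The only point worth flagging is that the convexity and continuity hypotheses are not actually used in this argument — the equivalence is purely formal — but they are the natural setting guaranteeing that $\partial f(a)$ is a well-behaved (non-empty, weak* compact, convex) object, as noted just before the statement. Accordingly I do not expect any genuine obstacle; the role of the proposition is to translate the optimization condition ``$a$ minimizes $f$'' into the membership condition ``$0\in\partial f(a)$'', which is the form repeatedly exploited in Section \ref{proofs}.
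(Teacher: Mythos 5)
Your proof is correct and is exactly the argument the paper intends: the paper simply remarks that the proposition is a direct consequence of the definition of the subdifferential, and your unwinding of $0\in\partial f(a)$ into $f(y)\geq f(a)$ for all $y$ is that consequence. Your observation that continuity and convexity are not needed for this particular equivalence is also accurate.
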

An equivalent definition of the subdifferential of a continuous convex function can be given in terms of $f'_+(a,x)$, the right directional derivative of $f$ at $a$ in the direction $x$:
\begin{equation}
\partial f(a)=\{\varphi\in \X^*: \h\ \varphi(x)\leq f'_+(a,x) \text{ for all } x\in \X\}.\end{equation}
Moreover, for each $x\in \X$, 
\begin{equation}
f'_+(a,x)=\max\{\h \varphi(x):\varphi\in \partial f(a)\}. \label{defn2}
\end{equation}

The following rule of subdifferential calculus will be helpful in our analysis later. 
\begin{prop}\label{subdiff chain rule}
Let $\X$ and $\Y$ be Banach spaces. Let $S:\X\rightarrow \Y$ be a bounded linear map and let $L:\X\rightarrow \Y$ be the continuous affine map defined by $L(x)=S(x)+y_0$, for some $y_0\in \Y$. Let $g:\Y\rightarrow \R$ be a continuous convex function. Then
\begin{equation}
\partial(g\circ L)(a)=S^* \partial g(L(a)) \text{ for all } a\in \X,\label{eq 1.3.9}
\end{equation}
where $S^*$ denotes the real or complex adjoint of $S$ (depending on whether $\X$ and $\Y$ are both real or both complex Banach spaces.)
\end{prop}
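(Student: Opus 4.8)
The plan is to prove the set identity \eqref{eq 1.3.9} by establishing the two inclusions separately. First note that $g\circ L$ is continuous and convex, being a continuous convex function precomposed with a continuous affine map, so all the subdifferentials in sight are well defined, nonempty, and weak* compact. The inclusion $S^*\partial g(L(a)) \subseteq \partial(g\circ L)(a)$ is the routine direction: I would take any $\psi \in \partial g(L(a))$ and feed the point $L(x)=Sx+y_0$ into the defining subgradient inequality \eqref{eq 1.3.1} for $\psi$. Since $L(x)-L(a)=S(x-a)$, this reads $g(L(x))-g(L(a)) \geq \h\,\psi(S(x-a)) = \h\,(S^*\psi)(x-a)$, which is precisely the statement that $S^*\psi$ is a subgradient of $g\circ L$ at $a$.

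For the reverse inclusion I would first record the directional-derivative identity $(g\circ L)'_+(a,x)=g'_+(L(a),Sx)$, which drops out of the definition once one observes that $L(a+tx)=L(a)+t\,Sx$, so the difference quotients for $g\circ L$ at $a$ in direction $x$ coincide with those for $g$ at $L(a)$ in direction $Sx$. Combining this with the max formula \eqref{defn2} applied to $g$ at $L(a)$ gives
\begin{equation*}
(g\circ L)'_+(a,x) = \max\{\h\,\psi(Sx): \psi\in\partial g(L(a))\} = \max\{\h\,(S^*\psi)(x): \psi\in\partial g(L(a))\}.
\end{equation*}
Thus the (weak*) support function of the set $K:=S^*\partial g(L(a))$ in the direction $x$ equals $(g\circ L)'_+(a,x)$ for every $x\in\X$.

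The main obstacle is then passing from this equality of support functions to the set inclusion $\partial(g\circ L)(a)\subseteq K$, and here a separation argument is needed. Since $\partial g(L(a))$ is weak* compact and convex and $S^*$ is continuous from $(\Y^*,\text{weak*})$ to $(\X^*,\text{weak*})$ as the adjoint of a bounded linear map, $K$ is a weak* compact convex subset of $\X^*$. If some $\varphi\in\partial(g\circ L)(a)$ failed to lie in $K$, the separating hyperplane theorem, applied in the real locally convex space $(\X^*,\text{weak*})$ whose continuous real-linear functionals are exactly the maps $\eta\mapsto\h\,\eta(x)$ for $x\in\X$, would furnish an $x\in\X$ and a scalar $\alpha$ with $\h\,\varphi(x) > \alpha \geq \sup_{\eta\in K}\h\,\eta(x)$. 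But the supremum on the right is precisely $(g\circ L)'_+(a,x)$, which by the directional-derivative characterization of the subdifferential dominates $\h\,\varphi(x)$, a contradiction. The one point I expect to require care is the complex case: since separation is a statement about real locally convex spaces, I would treat a complex $\X$ throughout as a real Banach space and use that a complex-linear functional is determined by its real part, which is consistent with the appearance of $\h$ in every inequality above and guarantees that the separating functional is realized by a genuine element of $\X$. This yields $\varphi\in S^*\partial g(L(a))$ and completes the proof.
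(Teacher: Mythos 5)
Your proof is correct. Note that the paper itself gives no proof of this proposition: it is stated as a known rule of subdifferential calculus, implicitly deferred to the cited convex analysis references (Hiriart-Urruty--Lemar\'echal, Z\u{a}linescu), so there is no ``paper approach'' to compare against. Your argument is the standard one for pre-composition with a continuous affine map: the easy inclusion $S^*\partial g(L(a))\subseteq\partial(g\circ L)(a)$ by direct substitution into \eqref{eq 1.3.1}, and the reverse inclusion via the identity $(g\circ L)'_+(a,x)=g'_+(L(a),Sx)$, the max formula \eqref{defn2}, and weak* separation from the weak* compact convex set $S^*\partial g(L(a))$. The two points that genuinely require care --- that $S^*$ is weak*-to-weak* continuous so the image set is weak* compact (hence weak* closed, which is what separation needs), and that in the complex case the separating functional on $(\X^*,\mathrm{weak*})$ is of the form $\eta\mapsto\h\,\eta(x)$ for an actual $x\in\X$ --- are both addressed correctly. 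The continuity hypothesis on $g$ is used exactly where it should be, namely to invoke \eqref{defn2} and the directional-derivative characterization of the subdifferential; without some such qualification condition only the easy inclusion would survive in infinite dimensions.
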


 For any norm $\|\cdot\|$  on the space $\mat$, it is well known that
\begin{equation}
\partial \|A\|=\{G\in \mat: \|A\|=\h \tr(G^* A),\|G\|^*\leq 1\},\label{eq 1.4.1}
\end{equation}
where $\|\cdot\|^*$ is the dual norm of $\|\cdot\|$, and \begin{equation}
\|T\|=\sup_{\|X\|^*=1} |\tr(T^* X)|=\sup_{\|X\|^*=1} \h \tr(T^* X).\label{supnorm}
\end{equation} 
 The subdifferentials of  some classes of matrix norms, namely unitarily invariant norms and induced norms, have been computed by Watson \cite{watson}. 
The following expression for the subdifferential of the Ky Fan $k$-norms was also given by him in \cite{watson93}. 
Let $1\leq k\leq n$. Let the multiplicity of $s_k(A)$ be $r+q$, where $r\geq 0$ and $q\geq 1$, such that
$$s_{k-q+1}(A)=\cdots=s_{k+r}(A).$$ 
Let $g:\mat\rightarrow \R$ be the function defined as $g(A)=\|A\|_{(k)}$.
\begin{theorem}[{{\cite{watson93}}}]
\label{watson ky fan}
 Let $A=USV^*$ be a singular value decomposition of $A$ and let the matrices $U, V$ be partitioned as $U=[U_1: U_2:U_3]$ and $V=[V_1:V_2:V_3]$ where $U_1,V_1\in \mathbb M(n, k-q); U_2,V_2\in \mathbb M(n, r+q); U_3,V_3\in \mathbb M(n, n-k-r)$. If $ s_k(A)>0 $, then $G\in \partial g(A)$ if and only if there exists a positive semidefinite matrix $T\in \mathbb M(r+q) $ with $\lambda_1(T)\leq 1 \text{ and }  \sum_{j=1}^{r+q} \lambda_j(T)=q$ such that 
$G=U_1V_1^*+U_2TV_2^*.$ If $ s_k(A)=0$, then $G\in \partial g(A)$ if and only if there exists $T\in \mathbb M(n-k+q, r+q) $ with $s_1(T)\leq 1 \text{ and }  \sum_{j=1}^{r+q} s_j(T)\leq q$ such that 
$G=U_1V_1^*+[U_2:U_3]TV_2^*.$
\end{theorem}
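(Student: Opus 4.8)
The plan is to compute $\partial g(A)$ directly from the general description of the subdifferential of a norm recorded in \eqref{eq 1.4.1}, namely $\partial g(A)=\{G:\|A\|_{(k)}=\h\tr(G^*A),\ \|G\|_{(k)}^*\le 1\}$. The first task is to identify the dual norm: the Ky Fan $k$-norm corresponds to the symmetric gauge functional $x\mapsto\sum_{i=1}^k x_i^{\downarrow}$, whose dual gauge is $y\mapsto\max\{y_1^{\downarrow},\frac1k\sum_i y_i^{\downarrow}\}$, so that $\|G\|_{(k)}^*=\max\{s_1(G),\frac1k\sum_{i=1}^n s_i(G)\}$. Hence $\|G\|_{(k)}^*\le 1$ unpacks into the two conditions $s_1(G)\le 1$ and $\sum_{i=1}^n s_i(G)\le k$. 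I would then exploit unitary invariance to simplify: writing $A=USV^*$ and setting $\tilde G=U^*GV$, both the extremality condition and the dual-norm bound are preserved ($\tr(G^*A)=\tr(\tilde G^*S)$ and $\|\tilde G\|_{(k)}^*=\|G\|_{(k)}^*$), so it suffices to characterize those $\tilde G$ with $s_1(\tilde G)\le 1$, $\sum_i s_i(\tilde G)\le k$ and $\h\tr(\tilde G^*S)=\|A\|_{(k)}$, where $S=\mathrm{diag}(s_1(A),\dots,s_n(A))$, and then read off $G=U\tilde G V^*$.

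The heart of the argument is an equality analysis on the diagonal of $\tilde G$. Since $S$ is diagonal, $\h\tr(\tilde G^*S)=\sum_i s_i(A)\,\h(\tilde G_{ii})$. Using the elementary bounds $|\tilde G_{ii}|\le s_1(\tilde G)\le 1$ and $\sum_i|\tilde G_{ii}|\le\sum_i s_i(\tilde G)\le k$ (the latter from $\sum_i|\tilde G_{ii}|=\tr(D\tilde G)$ for a suitable diagonal unitary $D$, together with von Neumann's trace inequality), I would set up the chain
\[
\|A\|_{(k)}=\sum_i s_i(A)\,\h(\tilde G_{ii})\le\sum_i s_i(A)\,|\tilde G_{ii}|\le\sum_{i=1}^k s_i(A),
\]
where the last step is a linear-programming bound: a weighted sum of the decreasing sequence $s_i(A)$ with weights in $[0,1]$ summing to at most $k$ is maximized by saturating the top entries. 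Forcing equality throughout pins down the diagonal: $\tilde G_{ii}=1$ for $i\le k-q$, the entries on the tied block are real, nonnegative and sum to $q$, and (when $s_k(A)>0$) all remaining diagonal entries vanish because the budget $k$ is already exhausted.

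From here the block structure follows from two standard facts. First, if a contraction has a diagonal entry equal to $1$, the corresponding row and column are otherwise zero (read off from $\tilde G^*\tilde G\preceq I$ and $\tilde G\tilde G^*\preceq I$); applying this to the indices $i\le k-q$ splits off an identity block and reduces $\tilde G$ to $I_{k-q}\oplus\tilde G'$. Second, in the case $s_k(A)>0$ the diagonal information gives $\tr(\tilde G')=q$ while the dual bound gives $\sum_j s_j(\tilde G')\le q$; since $\h\tr(\tilde G')\le\sum_j s_j(\tilde G')$ with equality only for positive semidefinite matrices, $\tilde G'$ must be positive semidefinite with $\tr(\tilde G')=q$, and its vanishing diagonal entries on the third block force that block to vanish entirely. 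Setting $T$ to be the surviving $(r+q)\times(r+q)$ block yields $G=U_1V_1^*+U_2TV_2^*$ with $T\succeq 0$, $\lambda_1(T)\le 1$, $\sum_j\lambda_j(T)=q$. When $s_k(A)=0$ the objective is blind to the block over the zero singular values, so no positivity is forced there and one is left only with the constraints $s_1(\tilde G')\le 1$ and $\sum_j s_j(\tilde G')\le q$, which is exactly the weaker description of $T$ in part (b). The converse inclusions are routine: for a $G$ of the stated form one computes $\tr(G^*A)=\sum_{i=1}^{k-q}s_i(A)+s_k(A)\tr(T)=\|A\|_{(k)}$ using orthonormality of the columns of $U,V$, and checks $s_1(G)=1$ and $\sum_i s_i(G)=(k-q)+\sum_j s_j(T)\le k$, whence $\|G\|_{(k)}^*\le 1$.

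I expect the main obstacle to be the equality analysis that converts the scalar extremality condition into the rigid block structure, and in particular extracting \emph{positive semidefiniteness} of $T$ (rather than merely a contraction with a prescribed singular-value sum) in the case $s_k(A)>0$, together with correctly isolating the genuinely weaker conclusion when $s_k(A)=0$. Care is also needed to track how the ties among the singular values of $A$ leave exactly the freedom encoded by $T$ while rigidly forcing the identity and zero blocks.
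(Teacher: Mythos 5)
The paper does not prove this statement---it is imported from Watson \cite{watson93}---so there is no internal proof to measure you against; I can only assess your argument on its own terms and against the related machinery the paper actually develops. Your derivation is correct. Starting from \eqref{eq 1.4.1} with the dual norm $\|G\|_{(k)}^{*}=\max\{\|G\|_{\infty},\tfrac1k\|G\|_{1}\}$ (an identity the paper itself uses, citing \cite[Ex.\ IV.2.12]{bhatiama}), passing to $\tilde G=U^{*}GV$, and running the equality analysis on $\sum_i s_i(A)\h\tilde G_{ii}$ is sound: the bounds $|\tilde G_{ii}|\le s_1(\tilde G)$ and $\sum_i|\tilde G_{ii}|\le\sum_i s_i(\tilde G)$ are standard, the linear-programming equality case correctly pins the diagonal, the ``unit diagonal entry of a contraction kills its row and column'' step splits off $I_{k-q}$, and the equality case of $\h\tr M\le\|M\|_{1}$ is exactly what extracts positive semidefiniteness of $T$ when $s_k(A)>0$. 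You also treat the degenerate case properly: $s_k(A)=0$ forces $k+r=n$, the objective ignores the zero block, and only the two dual-norm constraints on $T$ survive. This is a genuinely different route from what the paper does for its own description of $\partial g(A)$ (Theorem \ref{thm subdiff ky fan}): there the author computes the directional derivative $g'_+(A,\cdot)$ by a compactness argument on singular vectors of $A+tX$ and then identifies $\partial g(A)$ as a convex hull of rank-$k$ partial isometries via the separating hyperplane theorem. Your route buys the explicit parametrization by $T$ that Theorem \ref{thm 2} consumes, while the paper's route produces the extreme-point description that Theorems \ref{thm 1} and \ref{thm 3} consume; both describe the same set. One presentational caution: in the equality analysis, derive ``$|\tilde G_{ii}|=0$ for $s_i<s_k$'' from the Abel-summation equality conditions themselves rather than from the budget being ``already exhausted,'' since you only know the tied block sums to exactly $q$ after the outer entries are shown to vanish.
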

 We obtain new formulas for $\partial g(A)$ that can be used more easily in our problem. The computations are similar to the ones in \cite{watson}. To do so, we first calculate $g'_+(A,\cdot)$. For this, an important thing to observe is that the Ky Fan $k$-norm of a matrix $A$ is also given by 
\begin{eqnarray}
\|A\|_{(k)}&=&\max _{\substack{U,V\in \mathbb M(n,k)\\U^*U=V^*V=I_k}}\h\tr U^* A V=\max_{\substack{U,V\in \mathbb M(n,k)\\U^*U=V^*V=I_k}}|\tr U^* A V|. \label{fan}
\end{eqnarray}
(See \cite[p. 791]{marshallolkin}.)
If $A$ is positive semidefinite, then
\begin{equation}
\|A\|_{(k)}=\max _{\substack{U\in \mathbb M(n,k)\\U^*U=I_k}}\tr U^* A U.\label{fanherm}
\end{equation}

\begin{theorem}\label{thm dir der ky fan}
For  $X\in \mat$,
\begin{equation}
g'_+(A,X)
=\max_{\substack{ \\ u_1,\ldots,u_k\rm{\ o.n. }\\ v_1,\ldots,v_k\rm{\ o.n. }\\ Av_i=s_i(A) u_i}}\sum_{i=1}^k \h\langle u_i,Xv_i\rangle.\label{limit}
\end{equation}
\end{theorem}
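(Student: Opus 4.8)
The plan is to read the directional derivative directly off the variational formula \eqref{fan}, which exhibits $g$ as a pointwise maximum of real-linear functionals, and then to carry out a Danskin-type argument. Write $\mathcal S=\{(U,V):U,V\in\mathbb M(n,k),\ U^*U=V^*V=I_k\}$, a compact set, and put $\phi_{U,V}(Y)=\h\tr U^*YV$, so that \eqref{fan} reads $g(Y)=\max_{(U,V)\in\mathcal S}\phi_{U,V}(Y)$, each $\phi_{U,V}$ being real-linear in $Y$ and $\phi_{U,V}(Y)$ jointly continuous in $(U,V,Y)$. Let $\mathcal S(A)=\{(U,V)\in\mathcal S:\phi_{U,V}(A)=g(A)\}$ be the set of maximizers at $A$, and let $\mathcal C(A)$ be the set of those $(U,V)\in\mathcal S$ whose columns satisfy $Av_i=s_i(A)u_i$ for $1\le i\le k$. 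Then $\mathcal C(A)\subseteq\mathcal S(A)$, because for such a pair $\phi_{U,V}(A)=\sum_{i=1}^k s_i(A)=g(A)$. I want to prove $g'_+(A,X)=\max_{(U,V)\in\mathcal C(A)}\sum_{i=1}^k\h\langle u_i,Xv_i\rangle$, whose right-hand side is exactly $\max_{\mathcal C(A)}\phi_{U,V}(X)$.

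For the lower bound, fix $(U,V)\in\mathcal C(A)\subseteq\mathcal S(A)$. Since $g(A+tX)\ge\phi_{U,V}(A+tX)=g(A)+t\,\phi_{U,V}(X)$ for $t>0$, each difference quotient is at least $\phi_{U,V}(X)$; letting $t\to0^+$ and maximizing over $\mathcal C(A)$ gives $g'_+(A,X)\ge\max_{\mathcal C(A)}\phi_{U,V}(X)$. For the upper bound I use convexity of $g$: the quotient $t\mapsto(g(A+tX)-g(A))/t$ is nondecreasing, so $g'_+(A,X)$ exists and equals its limit along any $t_n\downarrow0$. Choosing $(U_n,V_n)\in\mathcal S$ with $g(A+t_nX)=\phi_{U_n,V_n}(A+t_nX)$ and using $g(A)\ge\phi_{U_n,V_n}(A)$ yields $(g(A+t_nX)-g(A))/t_n\le\phi_{U_n,V_n}(X)$. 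By compactness of $\mathcal S$ I pass to a subsequence with $(U_n,V_n)\to(U_*,V_*)$; joint continuity together with $g(A+t_nX)\to g(A)$ forces $\phi_{U_*,V_*}(A)=g(A)$, so $(U_*,V_*)\in\mathcal S(A)$, and $\phi_{U_n,V_n}(X)\to\phi_{U_*,V_*}(X)$. Hence $g'_+(A,X)\le\phi_{U_*,V_*}(X)$ for some maximizer $(U_*,V_*)\in\mathcal S(A)$.

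It remains to identify the maximizers, which is the heart of the argument. Given $(U,V)\in\mathcal S(A)$, set $M=U^*AV\in\mathbb M(k)$. Using $s_i(U^*AV)\le s_i(A)$ for the isometries $U,V$, the chain $g(A)=\h\tr M\le|\tr M|\le\|M\|_1=\sum_{i=1}^k s_i(M)\le\sum_{i=1}^k s_i(A)=g(A)$ is an equality throughout; the equality $\h\tr M=\|M\|_1$ forces $M\ge0$, and the termwise comparison forces the eigenvalues of $M$ to be $s_1(A),\ldots,s_k(A)$. Diagonalizing $M=WDW^*$ with $D=\mathrm{diag}(s_1(A),\ldots,s_k(A))$ and $W\in\mathbb M(k)$ unitary, and setting $U_0=UW,\ V_0=VW$, I get $U_0^*AV_0=D$, so the orthogonal projection of $A(V_0)_i$ onto the range of $U_0$ is $s_i(A)(U_0)_i$. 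The Ky Fan eigenvalue bound gives $\sum_{i=1}^k\|A(V_0)_i\|^2=\tr(V_0^*A^*AV_0)\le\sum_{i=1}^k s_i(A)^2$; since the projected parts already account for $\sum_i s_i(A)^2$, the components of $A(V_0)_i$ orthogonal to the range of $U_0$ vanish, giving $A(V_0)_i=s_i(A)(U_0)_i$, i.e.\ $(U_0,V_0)\in\mathcal C(A)$. Finally $\phi_{U,V}(X)=\h\tr U^*XV$ is invariant under $(U,V)\mapsto(UW,VW)$ by cyclicity of the trace, so $\phi_{U_*,V_*}(X)=\phi_{U_0,V_0}(X)\le\max_{\mathcal C(A)}\phi_{U,V}(X)$; combined with the previous paragraph this gives the upper bound and hence \eqref{limit}.

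I expect the main obstacle to be the maximizer analysis in the last paragraph: pinning down the equality conditions (that $\h\tr M=\|M\|_1$ forces $M\ge0$ and that the singular-value comparison is termwise sharp) and, more delicately, using the Ky Fan trace bound to annihilate the part of $AV_0$ lying outside the range of $U_0$, which is precisely what upgrades the compression identity $U_0^*AV_0=D$ to the genuine singular-vector relations $Av_i=s_i(A)u_i$. The reduction by a simultaneous unitary rotation is indispensable here, since $\mathcal C(A)$ is in general strictly smaller than $\mathcal S(A)$, and it is only the invariance of the $X$-objective that lets one replace an arbitrary maximizer by one of the constrained form.
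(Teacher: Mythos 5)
Your proposal is correct, and its decisive step differs from the paper's. The lower bound is the same in both arguments. For the upper bound, the paper also takes $t_n\downarrow 0$ and passes to a subsequential limit on a compact set, but it chooses the witnesses at $A+t_nX$ to be genuine singular-vector frames, i.e.\ pairs with $(A+t_nX)v_i(t_n)=s_i(A+t_nX)u_i(t_n)$; continuity of singular values then makes the limit vectors satisfy $Av_i'=s_i(A)u_i'$ automatically, so the limit lands in your set $\mathcal C(A)$ with no further work. You instead take arbitrary maximizers of the variational formula \eqref{fan} at $A+t_nX$, whose limits lie only in the larger set $\mathcal S(A)$ of maximizers at $A$, and you must then prove a structure theorem: every maximizing isometry pair, after a simultaneous right rotation by the unitary diagonalizing $M=U^*AV$ (which leaves $\h\tr U^*XV$ unchanged), becomes a singular-vector frame. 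Your equality-case analysis — $\h\tr M=\|M\|_1$ forces $M\geq 0$, the termwise comparison $s_i(M)\leq s_i(A)$ pins the eigenvalues of $M$, and the Ky Fan bound $\tr(V_0^*A^*AV_0)\leq\sum_{i=1}^k s_i(A)^2$ kills the component of $AV_0$ outside the range of $U_0$ — is correct and upgrades the compression identity to $Av_i=s_i(A)u_i$ as claimed. What your route buys is a Danskin-type formula $g'_+(A,X)=\max_{\mathcal S(A)}\h\tr U^*XV$ together with a characterization of the equality case in Ky Fan's variational principle, both of independent interest; what it costs is exactly that extra maximizer analysis, which the paper's choice of singular-vector witnesses renders unnecessary.
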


\begin{proof}
From \eqref{fan}, we have
\begin{equation}
\|A\|_{(k)}=\max_{\substack{u_1,\ldots,u_k \text{ o.n. }\\ v_1,\ldots,v_k \text{ o.n. }}}\sum_{i=1}^k \h \langle u_i,A v_i\rangle.\label{fan2}
\end{equation}
For any sets of $k$ orthonormal vectors $u_1,\ldots,u_k$ and $ v_1,\ldots,v_k$ satisfying $ A v_i=s_i(A) u_i, 1\leq i\leq k$, we have
\begin{eqnarray*}
\|A+tX\|_{(k)}&\geq & \sum_{i=1}^k \h\langle u_i, (A+tX)v_i\rangle\\
&=& \sum_{i=1}^k s_i(A)+t \sum_{i=1}^k\h \langle u_i, Xv_i\rangle\\
&=& \|A\|_{(k)}+t \sum_{i=1}^k\h \langle u_i, X v_i\rangle.
\end{eqnarray*}
This gives for $t>0$,
\begin{equation}
 \frac{\|A+tX\|_{(k)}-\|A\|_{(k)}}{t}\geq \max_{\substack{ \\ u_1,\ldots,u_k\text{ o.n. }\\ v_1,\ldots,v_k\text{ o.n. }\\ Av_i=s_i(A) u_i}}\sum_{i=1}^k \h\langle u_i,Xv_i\rangle. \label{greaterthan}
\end{equation}
Now for any sets of $k$ orthonormal vectors $u_1(t),\ldots,u_k(t)$ and $ v_1(t),\ldots,v_k(t)$ satisfying 
\begin{equation}
(A+tX) v_i(t)=s_i(A+tX) u_i(t), \quad 1\leq i\leq k,\label{1'}
\end{equation} we have
\begin{eqnarray*}
\|A\|_{(k)}&\geq & \sum_{i=1}^k \h\langle u_i(t), Av_i(t)\rangle\\
&=& \sum_{i=1}^k s_i(A+tX) - t\sum_{i=1}^k\h \langle u_i(t), Xv_i(t)\rangle\nonumber\\
&=& \|A+tX\|_{(k)}-t \sum_{i=1}^k\h \langle u_i(t),X v_i(t)\rangle. 
 \end{eqnarray*}
So for each $t>0$, we obtain
\begin{equation}
 \frac{\|A+tX\|_{(k)}-\|A\|_{(k)}}{t}\leq \sum_{i=1}^k \h\langle u_i(t),Xv_i(t)\rangle. \label{lessthan}
\end{equation}
Consider a sequence $\{t_n\}$ of positive real numbers converging to zero as $n\rightarrow \infty$.  
Since the unit ball in $\C^n$  is compact, there exists a subsequence $\{t_{n_{m}}\}$ of $\{t_n\}$ such that for each $1\leq i\leq k$, there exist ${u'_i}$ and ${v'_i}$ such that $\{u_i(t_{n_m})\}$ and $\{v_i(t_{n_m})\}$ converge to $u'_i$ and $v'_i$, respectively, as $m\rightarrow \infty$. Then the sets of vectors $u'_1,\ldots,u'_k$ and $v'_1,\ldots,v'_k$ are orthonormal. By continuity of singular values, 
we also know that \begin{equation}
s_i(A+t_{n_m}B)\rightarrow s_i(A) \text{ as } m\rightarrow \infty.\label{sv}
\end{equation} Hence we obtain $A{v'_i}=s_i(A) {u'_i}$  for all $ 1\leq i\leq k$. By \eqref{lessthan}, we get that
\begin{equation}
g'_+(A,X)= \lim_{m\rightarrow \infty} \frac{\|A+t_{n_m}X\|_{(k)}-\|A\|_{(k)}}{t_{n_m}}\leq  \max_{\substack{ \\ u_1,\ldots,u_k\text{ o.n. }\\ v_1,\ldots,v_k\text{ o.n. }\\ Av_i=s_i(A) u_i}}\sum_{i=1}^k\h \langle u_i,Xv_i\rangle.
\end{equation}
Combining this with \eqref{greaterthan}, we obtain the required result. 
\end{proof}

The above proof works equally well if the maximum in \eqref{limit} is taken over the sets of orthonormal vectors $u_1,\ldots,u_k$ and $v_1,\ldots,v_k$  such that for each $1\leq i \leq k$, $u_i$ and $v_i$ are left and right singular vectors of $A$, respectively, corresponding to the $i$th singular value $s_i(A)$ of $A$. We note here that for each $t>0$, if along with \eqref{1'}, we also have $$(A+tX)^*u_i(t)=s_i(A+tX) v_i(t),$$ then by passing onto a subsequence $\{t_{n_m}\}$ as in the above proof, and taking the limit as $m\rightarrow \infty$,  we obtain $$A^* u'_i=s_i(A) v'_i.$$ So for each $X\in \mat$, we get
\begin{equation}
g'_+(A,X)=\max_{\substack{ \\ u_1,\ldots,u_k\text{ o.n. }\\ v_1,\ldots,v_k\text{ o.n. }\\ Av_i=s_i(A) u_i\\ A^* u_i=s_i(A) v_i}}\sum_{i=1}^k\h \langle u_i,Xv_i\rangle.\label{general}
\end{equation}

\begin{corollary}
Let $A$ be positive semidefinite. Let $\lambda_1(A)\geq \cdots \geq \lambda_n(A) \geq 0$ be the eigenvalues of $A$, with $\lambda_k(A)>0$.  Then
\begin{equation}
 g'_+(A,X)=\max_{\substack{ \\ u_1,\ldots,u_k\rm{\ o.n. }\\ Au_i=\lambda_i(A) u_i}}\sum_{i=1}^k\h \langle u_i,Xu_i\rangle. \label{subdiffps}
\end{equation}
\begin{proof}
We know that  if  $Av=\lambda u$ and $Au=\lambda v$, where $\lambda>0$,  then $u=v$. Using this, the required result follows from \eqref{general}. 
\end{proof}
\end{corollary}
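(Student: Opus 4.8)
The plan is to derive \eqref{subdiffps} directly from the general formula \eqref{general} by exploiting the special structure of a positive semidefinite $A$. For such $A$ we have $A^*=A$ and $s_i(A)=\lambda_i(A)$ for each $i$, so the two constraints appearing under the maximum in \eqref{general}, namely $Av_i=s_i(A)u_i$ and $A^*u_i=s_i(A)v_i$, become $Av_i=\lambda_i(A)u_i$ and $Au_i=\lambda_i(A)v_i$. My goal is to show that these force $u_i=v_i$ for each $1\le i\le k$, which collapses the objective $\sum_{i=1}^k\h\langle u_i,Xv_i\rangle$ to $\sum_{i=1}^k\h\langle u_i,Xu_i\rangle$ and the constraint set to orthonormal eigenvectors of $A$, exactly as in \eqref{subdiffps}.

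The one substantive step is the elementary matching claim: if $Av=\lambda u$ and $Au=\lambda v$ with $\lambda>0$, then $u=v$. I would prove this by forming the combinations $u\pm v$. Adding the two equations gives $A(u+v)=\lambda(u+v)$, while subtracting them gives $A(u-v)=-\lambda(u-v)$. Thus $u-v$ lies in the eigenspace of $A$ for the eigenvalue $-\lambda$; since $A$ is positive semidefinite and $\lambda>0$, this eigenspace is trivial, forcing $u-v=0$.

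To finish, I would invoke the hypothesis $\lambda_k(A)>0$: because the eigenvalues are arranged in decreasing order, $\lambda_i(A)\ge\lambda_k(A)>0$ for every $1\le i\le k$, so the matching claim applies to each index and yields $u_i=v_i$. Once $u_i=v_i$, the orthonormality conditions on the $v_i$ become redundant and the paired constraints reduce to $Au_i=\lambda_i(A)u_i$; feeding this back into \eqref{general} gives \eqref{subdiffps}. I do not anticipate any real obstacle here. The only point requiring care is that the positivity hypothesis is genuinely used: it is precisely what guarantees $\lambda_i(A)>0$ for all $i\le k$, and hence makes the matching claim available for every term in the sum, so that no left/right singular-vector ambiguity survives.
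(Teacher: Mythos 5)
Your proposal is correct and follows the paper's own route exactly: specialize \eqref{general} to $A^*=A$, $s_i(A)=\lambda_i(A)$, and use the claim that $Av=\lambda u$, $Au=\lambda v$ with $\lambda>0$ forces $u=v$ (which the paper merely asserts and you correctly justify via $A(u-v)=-\lambda(u-v)$ and positive semidefiniteness). You also rightly note that $\lambda_k(A)>0$ is what makes the claim applicable to every index $i\leq k$.
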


\begin{theorem}\label{thm subdiff ky fan}
Let $A\in \mat$. Then
\begin{eqnarray}
\partial g(A)
&=& \conv\Bigg\{\sum_{i=1}^k u_i v_i^*:u_1,\ldots,u_k,v_1,\ldots,v_k \in \C^n, u_1,\ldots,u_k {\rm{\ o.n.}}, v_1,\ldots,v_k  {\rm{\ o.n.}},\nonumber\\& & \hspace{3cm} A v_i=s_i(A) u_i \text{ for all }1\leq i\leq k\Bigg\}\label{subdifferential}\\
&=&  \conv\Bigg\{\sum_{i=1}^k u_i v_i^*:u_1,\ldots,u_k,v_1,\ldots,v_k \in \C^n, u_1,\ldots,u_k {\rm{\ o.n.}}, v_1,\ldots,v_k{\rm{\ o.n.}},\nonumber\\
& &  \hspace{2.7cm}  A v_i=s_i(A) u_i, A^* u_i=s_i(A) v_i \text{ for all }1\leq i\leq k\Bigg\}.\label{subdifferential2}
\end{eqnarray}
\end{theorem}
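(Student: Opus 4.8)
The plan is to exploit the duality between a closed convex set and its support function. By the characterization recorded in \eqref{defn2}, the support function of $\partial g(A)$ in the direction $X$ is exactly the right directional derivative $g'_+(A,X)$; equivalently, under the identification of the dual of $\mat$ with $\mat$ via $\varphi\leftrightarrow G$, $\varphi(X)=\tr(G^*X)$, the subdifferential is the intersection of half-spaces
$$\partial g(A)=\{G\in\mat:\h\tr(G^*X)\le g'_+(A,X)\text{ for all }X\in\mat\}.$$
Hence, to establish \eqref{subdifferential} it suffices to exhibit the right-hand side as a closed convex set whose support function is also $g'_+(A,\cdot)$, and for this I would invoke Theorem \ref{thm dir der ky fan}.

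Concretely, write $\M$ for the collection of matrices $\sum_{i=1}^k u_iv_i^*$ appearing inside the convex hull in \eqref{subdifferential}, so the right-hand side is $\conv\M$. For $G=\sum_{i=1}^k u_iv_i^*\in\M$, cyclicity of the trace gives $\tr(G^*X)=\sum_{i=1}^k\langle u_i,Xv_i\rangle$, so, since a set and its convex hull share the same support function, that support function is
\[
\sup_{G\in\M}\h\tr(G^*X)=\max_{\substack{u_1,\ldots,u_k\text{ o.n.}\\ v_1,\ldots,v_k\text{ o.n.}\\ Av_i=s_i(A)u_i}}\sum_{i=1}^k\h\langle u_i,Xv_i\rangle.
\]
By Theorem \ref{thm dir der ky fan} (that is, \eqref{limit}) the right-hand side equals $g'_+(A,X)$, so the support functions of $\conv\M$ and of $\partial g(A)$ coincide.

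To convert matching support functions into equality of sets, I would first check that $\M$ is compact: it is the image, under the continuous map $(u_1,\ldots,u_k,v_1,\ldots,v_k)\mapsto\sum_i u_iv_i^*$, of the set of admissible orthonormal frames satisfying $Av_i=s_i(A)u_i$, and this constraint set is a closed subset of a product of unit spheres, hence compact (and nonempty, since any singular value decomposition of $A$ furnishes such a frame). Therefore $\conv\M$ is a compact convex subset of $\mat$, so both $\conv\M$ and $\partial g(A)$ are closed convex sets, each recoverable from its support function as $\{G:\h\tr(G^*X)\le g'_+(A,X)\text{ for all }X\}$ by the separating hyperplane theorem; since the support functions agree, $\conv\M=\partial g(A)$, which is \eqref{subdifferential}. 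The identity \eqref{subdifferential2} then follows by the same argument with \eqref{limit} replaced by the refined directional-derivative formula \eqref{general}, whose admissible frames carry the extra constraint $A^*u_i=s_i(A)v_i$; the corresponding (smaller but still compact) set again has support function $g'_+(A,\cdot)$, so its convex hull is once more $\partial g(A)$.

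The main obstacle I anticipate is not the support-function bookkeeping, which is routine, but the careful verification that the admissible-frame constraint set is genuinely compact and that passing to its convex hull does not enlarge the support function, so that the recovery of a closed convex set from its support function is justified in this matrix setting. Once compactness and this bipolar/separation step are secured, the two formulas drop out immediately from \eqref{limit} and \eqref{general}.
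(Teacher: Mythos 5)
Your proposal is correct and follows essentially the same route as the paper: the key input is Theorem \ref{thm dir der ky fan}, the set of admissible frames is observed to be compact, and the separating hyperplane theorem converts the support-function identity into equality of the two convex sets, with \eqref{general} handling \eqref{subdifferential2}. The only cosmetic difference is that you obtain both inclusions at once from the support-function/bipolar identity, whereas the paper proves the inclusion $\mathbb H(A)\subseteq\partial g(A)$ directly from the dual-norm characterization \eqref{eq 1.4.1} and reserves the separation argument for the reverse inclusion.
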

\begin{proof}
Denote the set on the right hand side of \eqref{subdifferential} by $\mathbb H(A)$.
 Let
$G\in \mathbb H(A)$. Then  $$G= \sum_{i=1}^k u_{i} v_{i}^*,$$ where $u_1,\ldots,u_k$ and $v_1,\ldots,v_k$ are orthonormal sets of vectors such that $Av_i=s_i(A)u_i$ for all $1\leq i\leq k$ . So
\begin{eqnarray*}
\h \tr (G^*A)
&=& \sum_{i=1}^k \h \langle  u_{i}, A v_{i}\rangle\\
&=&  \sum_{i=1}^k s_i(A)\\
&=& \|A\|_{(k)},
\end{eqnarray*}

and\begin{eqnarray*}
\h \tr(G^* X)&=&\sum_{i=1}^k\h \langle  u_{i}, X v_{i}\rangle\\
&\leq& \|X\|_{(k)}.
\end{eqnarray*}
Thus
$$\|G\|^*\leq 1.$$
So we get by \eqref{eq 1.4.1} that $\mathbb H(A)\subseteq \partial g(A)$, and therefore $\conv \mathbb H(A)\subseteq \partial g(A)$.


Now let $G\in \partial g(A)$. Suppose  $G\notin \conv \mathbb H(A)$. 
The set $\mathbb H(A)$ is compact, and  so is its convex hull. 
By  the Separating Hyperplane Theorem, there exists $X\in \mat$ such that  for all  sets of $k$ orthonormal vectors $u_1,\ldots,u_k$ and $v_1,\dots,v_k$ satisfying $Av_i=s_i(A) u_i$ for $ 1\leq i\leq k$, we have
$$\h \tr\left(X^* \left(\sum_{i=1}^k u_i v_i^*-G\right)\right)<0.$$
This implies
$$\max_{\substack{u_1,\ldots,u_k \text{ o.n.}\\ v_1,\ldots,v_k \text{ o.n.}\\ Av_i=s_i(A) u_i }} \sum_{i=1}^k \h \langle u_i, X v_i\rangle<\max_{G\in \partial g(A)}\h \tr(X^* G).$$
By \eqref{defn2}, the right hand side is 
$g'_+(A,X)$. By \eqref{limit}, this should be equal to the left hand side. This gives a contradiction. Thus we obtain \eqref{subdifferential}.

The expression \eqref{subdifferential2} can be proved similarly by using \eqref{general}, instead of \eqref{limit}.

\end{proof}

\begin{corollary}\label{cor 1.5.15}
Let $A$ be a positive semidefinite matrix, with eigenvalues $\lambda_1(A)\geq \cdots\geq \lambda_n(A)\geq 0$ such that $\lambda_{k}(A)>0$. Then
\begin{equation}
\partial g(A)=\conv\left\{\sum_{i=1}^k u_i u_i^*:u_1,\ldots,u_k\in \C^n, u_1,\ldots,u_k \text{ o.n.},  Au_i=\lambda_i(A) u_i\text{ for all }1\leq i\leq k\right\}.\label{eq 1.5.29}
\end{equation}
\end{corollary}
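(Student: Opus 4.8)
The plan is to derive Corollary~\ref{cor 1.5.15} directly from Theorem~\ref{thm subdiff ky fan} by specializing to the positive semidefinite case. Since $A$ is positive semidefinite with $\lambda_k(A)>0$, its singular value decomposition coincides with its spectral decomposition, so $s_i(A)=\lambda_i(A)$ for all $i$, and the left and right singular vectors can be taken equal. The key observation is the same one used in the corollary following \eqref{general}: if $Av=\lambda u$ and $A^*u=\lambda v$ with $\lambda>0$ and $A=A^*$ positive semidefinite, then $u=v$. I would invoke the expression \eqref{subdifferential2} rather than \eqref{subdifferential}, because \eqref{subdifferential2} already builds in both conditions $Av_i=s_i(A)u_i$ and $A^*u_i=s_i(A)v_i$, which are precisely what force $u_i=v_i$.

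First I would note that $\lambda_k(A)>0$ guarantees $s_i(A)=\lambda_i(A)>0$ for every $1\leq i\leq k$, so the collapsing argument applies to each index in the sum. Thus in \eqref{subdifferential2} every admissible tuple of orthonormal vectors $u_1,\ldots,u_k,v_1,\ldots,v_k$ satisfying both $A v_i=\lambda_i(A) u_i$ and $A^* u_i=\lambda_i(A) v_i$ must in fact have $v_i=u_i$ for all $i$. Consequently each generator $\sum_{i=1}^k u_i v_i^*$ of the convex hull equals $\sum_{i=1}^k u_i u_i^*$, where $u_1,\ldots,u_k$ is an orthonormal set of eigenvectors of $A$ with $Au_i=\lambda_i(A)u_i$. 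This shows the generating set in \eqref{subdifferential2} is exactly the generating set appearing in \eqref{eq 1.5.29}, whence the convex hulls agree and \eqref{eq 1.5.29} follows.

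Conversely, to be careful about the inclusion, I would check that every tuple $u_1,\ldots,u_k$ orthonormal with $Au_i=\lambda_i(A)u_i$ is genuinely admissible in \eqref{subdifferential2} when we set $v_i=u_i$: indeed then $Av_i=Au_i=\lambda_i(A)u_i=s_i(A)u_i$ and, using $A=A^*$, also $A^*u_i=Au_i=\lambda_i(A)u_i=s_i(A)v_i$, so both defining relations hold. This confirms that restricting the generators of \eqref{subdifferential2} to the positive semidefinite setting neither loses nor gains any points, so the two convex hulls coincide. I expect no serious obstacle here; the entire argument is the elementary equivalence $v_i=u_i$ under positivity, and the only point requiring a moment of attention is to use \eqref{subdifferential2} (with its extra condition $A^*u_i=s_i(A)v_i$) rather than \eqref{subdifferential}, since without that condition the vectors $u_i$ and $v_i$ need not coincide even for positive semidefinite $A$.
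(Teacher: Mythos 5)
Your proposal is correct and follows essentially the same route the paper intends: the paper leaves this corollary's proof implicit, but its proof of the preceding corollary on $g'_+(A,X)$ uses exactly the same collapsing fact (that $Av=\lambda u$ and $Au=\lambda v$ with $\lambda>0$ force $u=v$ for positive semidefinite $A$), applied there to \eqref{general} and here to \eqref{subdifferential2}. Your additional care in checking both inclusions of the generating sets, and your observation that \eqref{subdifferential2} rather than \eqref{subdifferential} must be used, are exactly the right points.
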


\section{Proofs}\label{proofs}
To prove Theorem \ref{thm 1}, we require the following lemma.
\begin{lemma}\label{lemma 2.1.7}
Let $X,Y\in \mat$ and let $Y$ be positive semidefinite. Let $\lambda_1(Y)\geq \cdots \geq \lambda_n(Y)\geq 0$ be the eigenvalues of $Y$. For $1\leq r\leq n$, let $$\mathcal W(X,Y)={\left\{\sum_{i=1}^{r} \langle u_i, Xu_i\rangle: u_1,\ldots,u_r \in \C^n, u_1,\ldots,u_r \text{ o.n.},Y u_i=\lambda_i(Y) u_i   \text{ for all }1\leq i\leq r \right\}}.$$ 
Then $\mathcal W(X,Y)$ is a convex set. 
\end{lemma}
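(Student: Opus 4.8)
The plan is to reduce $\mathcal W(X,Y)$ to a translate of a $k$-numerical range and then invoke the convexity of the latter. First I would exploit the spectral structure of the positive semidefinite matrix $Y$. Write $\mu:=\lambda_r(Y)$ for the eigenvalue occurring at the cutoff index $r$, let $m$ be its multiplicity, and let $a$ be the number of indices $i$ with $\lambda_i(Y)>\mu$, so that $\mu$ occupies the positions $a+1,\ldots,a+m$ and $s:=r-a$ satisfies $1\le s\le m$. The key structural observation is that in any admissible tuple $u_1,\ldots,u_r$ the orthogonality constraints decouple across distinct eigenvalues: if $Yu_i=\lambda_i(Y)u_i$ and $Yu_j=\lambda_j(Y)u_j$ with $\lambda_i(Y)\neq\lambda_j(Y)$, then $\langle u_i,u_j\rangle=0$ automatically, since eigenvectors of the Hermitian matrix $Y$ for distinct eigenvalues are orthogonal. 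Hence the only genuine constraints are orthonormality \emph{within} each eigenspace.

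Next I would note that each eigenvalue strictly larger than $\mu$ is ``completely filled'': for such an eigenvalue $\nu$, all of its multiplicity-many copies already appear among $\lambda_1(Y),\ldots,\lambda_a(Y)$, so the corresponding vectors $u_i$ form a full orthonormal basis of the eigenspace $E_\nu$. Consequently $\sum_{\lambda_i(Y)>\mu}u_iu_i^*=Q$, the spectral projection of $Y$ onto $\bigoplus_{\nu>\mu}E_\nu$, and this portion of the sum equals the constant $c:=\tr(QX)$, independent of the choice of tuple. The remaining $s$ vectors range over all orthonormal $s$-tuples inside $E_\mu$. Writing $P$ for the orthogonal projection onto $E_\mu$ and $\tilde X$ for the compression $PXP$ regarded as an operator on $E_\mu\cong\C^{\,m}$, and using $\langle w_l,Xw_l\rangle=\langle w_l,\tilde X w_l\rangle$ for $w_l\in E_\mu$, one obtains
\[
\mathcal{W}(X,Y)=c+\left\{\sum_{l=1}^{s}\langle w_l,\tilde X w_l\rangle : w_1,\ldots,w_s\in E_\mu\ \text{orthonormal}\right\}=c+W_s(\tilde X),
\]
where $W_s(\tilde X)$ denotes the $s$-numerical range of $\tilde X\in\mathbb M(m)$.

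Thus $\mathcal W(X,Y)$ is a translate of the $s$-numerical range of a matrix in $\mathbb M(m)$, and since translation by a fixed complex scalar preserves convexity, it remains to invoke the convexity of the $s$-numerical range of an arbitrary complex matrix. This last step is the main obstacle: for $s=1$ it is the classical Hausdorff--Toeplitz theorem, but for general $s$ and non-Hermitian $\tilde X$ it requires the known convexity of $W_s$ (equivalently, Westwick's theorem for the $c$-numerical range with $c=(1,\ldots,1,0,\ldots,0)$). A convenient way to package it is to show $W_s(\tilde X)=\{\tr(M\tilde X):0\le M\le I_m,\ \tr M=s\}$; the right-hand side is manifestly convex, being the affine image of the convex set of truncated density matrices, and the nontrivial inclusion is precisely that every such value is attained on a genuine rank-$s$ orthogonal projection. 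Finally I would dispose of the degenerate cases separately, namely $s=m$ (where $W_s(\tilde X)=\{\tr\tilde X\}$ is a single point) and $a=0$ (where $c=0$), in each of which convexity is either immediate or reduces to the same statement.
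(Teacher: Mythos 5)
Your argument is essentially the paper's own proof: both decompose the constraint set by the eigenspaces of $Y$, observe that every eigenspace for an eigenvalue above the cutoff is completely filled and hence contributes a fixed constant (the trace of the compression of $X$ to that eigenspace), and reduce the remainder to the $s$-numerical range of the compression of $X$ to the critical eigenspace $E_\mu$, whose convexity is the known theorem cited from Halmos. The only cosmetic difference is that the paper phrases the reduction as a Minkowski sum of singletons with one $m$-numerical range rather than as a translate, so your proposal is correct and follows the same route.
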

\begin{proof}
Let the number of distinct eigenvalues  of $Y$ 
 be $\ell$ and let $\Hil_1,\ldots,\Hil_\ell$ be the respective eigenspaces. Let  $m_1,\ldots,m_{\ell}$ be the dimensions of $\Hil_1,\ldots,\Hil_{\ell}$, respectively. Let $1\leq \ell' \leq \ell$ be such that $ m_1+\cdots+m_{\ell'-1}< r\leq m_1+\cdots+m_{\ell'}$. Let $m=r-(m_1+\cdots+m_{\ell'-1}).$ Set $$\Wk_j(X)=\left\{\sum_{i=1}^{m_j} \langle  u_i, X u_i\rangle: u_1,\ldots,u_{m_j} \in \Hil_j, u_1,\ldots,u_{m_j} \text{ o.n.}\right\} \text{ for } 1\leq j\leq \ell'-1,$$ and
$$\Wk_{\ell'}(X)=\left\{\sum_{i=1}^{m} \langle  u_i, X u_i\rangle: u_1,\ldots,u_{m} \in \Hil_{\ell'}, u_1,\ldots,u_{m} \text{ o.n.}\right\}.$$
Since $\Hil_1,\ldots,\Hil_\ell$ are mutually orthogonal, we have \begin{equation}
\Wk(X,Y)=\sum_{j=1}^{\ell'} \Wk_j(X).\label{3.1}
\end{equation}
 Note that $\Wk_j(X)$ is a singleton set for $1\leq j \leq \ell'-1$. Hence it is sufficient to show that $\Wk_{\ell'}(X)$ is convex. Let $\mathscr P_{\ell'}$ be the orthogonal projection from $\C^n$ onto $\Hil_{\ell'}$, and let $\iota_{\ell'}$ denote its adjoint (which is the inclusion map of $\Hil_{\ell'}$ into $\C^n$).  Then 
$\mathcal W_{\ell'}(X)$ is the $m$-numerical range of $\mathscr P_{\ell'} X\iota_{\ell'}$, which is convex (see \cite[p. 315]{halmos}).
 \end{proof}

We now state and prove a real version of Theorem \ref{thm 1}.

\begin{theorem}\label{thm 3.2}
Let $A=U |A|$ be a polar decomposition of $A$.  If
there exist $k$ orthonormal vectors $u_1,u_2,\ldots,u_k$ such that 
\begin{equation}
|A|\ u_i=s_i(A) u_i \text{ for all } 1\leq i\leq k\label{realcond1}
\end{equation}
and 
\begin{equation}
\sum_{i=1}^k \h\langle u_i, U^* B u_i\rangle=0, \label{realcond2}
\end{equation} then
\begin{equation}
\|A+t B\|_{(k)} \geq \|A\|_{(k)} \text{ for all } t\in \R. \label{real}
\end{equation}
If $s_k(A)>0$, then the converse is also true.
\end{theorem}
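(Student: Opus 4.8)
The plan is to prove Theorem~\ref{thm 3.2} by translating the variational statement \eqref{real} into a statement about subdifferentials, using the machinery of Section~\ref{preliminaries}. The key observation is that \eqref{real} asserts precisely that the convex function $h(t)=\|A+tB\|_{(k)}$ attains its minimum at $t=0$. By Proposition~\ref{p2.3}, this holds if and only if $0\in\partial h(0)$. I would set this up formally by considering the real affine map $L:\R\to\mat$ given by $L(t)=A+tB$, so that $h=g\circ L$ where $g(\cdot)=\|\cdot\|_{(k)}$, and apply the chain rule of Proposition~\ref{subdiff chain rule}. Since the linear part $S(t)=tB$ has real adjoint $S^*(G)=\h\tr(B^*G)$ on the real inner product structure of $\mat$, the condition $0\in\partial h(0)$ becomes the existence of some $G\in\partial g(A)$ with $\h\tr(B^*G)=0$.

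The forward direction I would then handle directly using the explicit formula \eqref{subdifferential} for $\partial g(A)$ from Theorem~\ref{thm subdiff ky fan}. Given the orthonormal vectors $u_1,\ldots,u_k$ satisfying \eqref{realcond1}, I would set $v_i=U^*u_i$ (using $A=U|A|$, so that $Av_i=U|A|U^*u_i=s_i(A)u_i$, after checking the $v_i$ are orthonormal), and form the candidate subgradient $G=\sum_{i=1}^k u_iv_i^*=\sum_{i=1}^k u_i(U^*u_i)^*=\bigl(\sum_{i=1}^k u_iu_i^*\bigr)U$. Then $\h\tr(B^*G)=\sum_{i=1}^k\h\langle u_i,BU^*u_i\rangle$, which I must reconcile with hypothesis \eqref{realcond2}; the needed identity $\sum_i\h\langle u_i,U^*Bu_i\rangle=0$ should match after being careful about whether the relevant vectors are the $u_i$ or their images under $U$. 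This $G$ lies in $\mathbb H(A)\subseteq\partial g(A)$ and satisfies the orthogonality condition, so $0\in\partial h(0)$ and \eqref{real} follows.

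For the converse under the assumption $s_k(A)>0$, I would argue that orthogonality forces $0\in\partial h(0)$, hence there exists $G\in\partial g(A)$ with $\h\tr(B^*G)=0$. By Theorem~\ref{thm subdiff ky fan}, such a $G$ is a convex combination $G=\sum_j c_j G^{(j)}$ of extreme points $G^{(j)}=\sum_i u_i^{(j)}(v_i^{(j)})^*$ arising from orthonormal systems tied to the top-$k$ singular vectors. The condition $s_k(A)>0$ is what guarantees the clean polar structure—via the corollary following \eqref{subdiffps}, the relation $Av=\lambda u$, $A^*u=\lambda v$ with $\lambda>0$ forces $v=U^*u$—so that each extreme point corresponds to genuine eigenvectors of $|A|$ for the top $k$ eigenvalues. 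The main obstacle is passing from the average $\sum_j c_j\,\h\tr(B^*G^{(j)})=0$ to a single system of $u_i$'s satisfying \eqref{realcond2} exactly rather than on average. This is where Lemma~\ref{lemma 2.1.7} is essential: the set $\{\sum_i\h\langle u_i,U^*Bu_i\rangle\}$ over admissible orthonormal systems is a convex subset of $\R$, so since $0$ lies in its convex hull (being the weighted average of values attained), it is itself an attained value—yielding the desired single orthonormal system realizing \eqref{realcond2} and completing the proof.
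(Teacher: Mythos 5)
Your converse argument is essentially the paper's: chain rule to get $0\in S^*\partial g$, the observation that $s_k(A)>0$ collapses the left/right singular vector pairs into eigenvectors of $|A|$ (the paper achieves this by rewriting the hypothesis as $\||A|+tU^*B\|_{(k)}\geq\||A|\|_{(k)}$ at the outset and invoking Corollary \ref{cor 1.5.15} for the positive semidefinite matrix $|A|$), and then Lemma \ref{lemma 2.1.7} to upgrade ``$0$ lies in the convex hull of attained values'' to ``$0$ is an attained value.'' That part is sound and matches the paper's route.

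The forward direction as written contains a concrete error. From $|A|u_i=s_i(A)u_i$ and $A=U|A|$ one gets $Au_i=s_i(A)\,Uu_i$: the vector $u_i$ is itself the \emph{right} singular vector, and $Uu_i$ is the corresponding left singular vector. Your choice $v_i=U^*u_i$ gives $Av_i=U|A|U^*u_i$, which equals $s_i(A)u_i$ only if $U^*u_i$ happens to be an eigenvector of $|A|$; the claimed identity $U|A|U^*u_i=s_i(A)u_i$ is false in general, so your $G=\bigl(\sum_{i=1}^k u_iu_i^*\bigr)U$ need not lie in $\mathbb H(A)$, and in any case $\h\tr(B^*G)$ for that $G$ equals $\sum_{i=1}^k\h\langle u_i,BU^*u_i\rangle$, which is not hypothesis \eqref{realcond2}. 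The repair is to use the pairs $(Uu_i,u_i)$, i.e.\ $G=U\sum_{i=1}^k u_iu_i^*\in\mathbb H(A)$, for which $\h\tr(B^*G)=\sum_{i=1}^k\h\langle u_i,U^*Bu_i\rangle=0$ as required. Note also that the paper proves this direction with no subdifferential machinery at all: it bounds $\|A+tB\|_{(k)}=\||A|+tU^*B\|_{(k)}\geq\sum_{i=1}^k\h\langle u_i,(|A|+tU^*B)u_i\rangle=\|A\|_{(k)}$ directly from \eqref{fan}, which is shorter and sidesteps the left-versus-right singular vector bookkeeping that tripped you up.
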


\begin{proof}
First suppose that there exist $k$ orthonormal vectors $u_1,u_2,\ldots,u_k$ such that 
$|A|\ u_i=s_i(A)\ u_i \text{ for all } 1\leq i\leq k$
and 
$\sum_{i=1}^k \h\langle u_i, U^* B u_i\rangle=0.$
We have $$\|A+t B\|_{(k)}= \||A|+t U^*B\|_{(k)}$$ and by \eqref{fan},
$$\||A|+t U^*B\|_{(k)}\geq \sum_{i=1}^k\h \langle u_i, (|A|+t U^*B) u_i\rangle.$$
So we get
\begin{eqnarray*}
\|A+t B\|_{(k)}&\geq& \sum_{i=1}^k \langle u_i, |A| u_i\rangle+ t \sum_{i=1}^k \h\langle u_i, U^*B u_i\rangle\\
&=& \sum_{i=1}^k s_i(A)\\
&=& \|A\|_{(k)}.
\end{eqnarray*}

Now suppose that $s_k(A)>0$ and $$\|A+tB\|_{(k)}\geq \|A\|_{(k)}\text{ for all }t\in \R.$$ This can also be written as  \begin{equation}\||A|+t U^* B\|_{(k)}\geq \||A|\|_{(k)}\text{ for all } t\in \R.\label{eq 2.1.18}\end{equation}
 Let $S: \R \rightarrow \mat$ be the map given by $S(t)=tU^*B$, $L:\R \rightarrow \mat$ be the map defined as $L(t)=|A|+tU^*B$ and $g:\mat \rightarrow \R_+$ be the map defined by $g(X)=\|X\|_{(k)}$.  Then we have that $g\circ L$ attains its minimum at zero. By Proposition \ref{p2.3}, we obtain that $0\in \partial (g\circ L)(0)$.  Using Proposition \ref{subdiff chain rule}, we obtain
\begin{equation}
0\in S^*\partial g(|A|).\label{eq 2.1.20}
\end{equation}

By Corollary \ref{cor 1.5.15}, this is equivalent to saying that
$$0\in \conv {\left\{\h \sum_{i=1}^{k} \langle u_i, U^*B u_i\rangle: u_1,\ldots,u_k \in \C^n, u_1,\ldots,u_k \text{ o.n.},|A| u_i=\lambda_i(|A|) u_i   \text{ for all }1\leq i\leq k \right\}}.$$
The set in the above equation is $\conv (\h \mathcal W(U^*B,|A|))$.
By Lemma \ref{lemma 2.1.7},  $\h \mathcal W(U^*B,|A|)$ is a convex set. 
So there exist $k$ orthonormal vectors $ u_1,\ldots,u_k $ such that $$|A| u_i=s_i(A) u_i $$and $$\h\sum_{i=1}^k \langle u_i, U^* B u_i\rangle=0.$$

\end{proof}

{\it Proof of Theorem \ref{thm 1}} \quad
Suppose that there exist $k$ orthonormal vectors $u_1,u_2,\ldots,u_k$ satisfying \eqref{cond1} and \eqref{cond2}. Let $\lambda \in \C$. Then similar to the argument in the proof of Theorem \ref{thm 3.2},  we get \begin{eqnarray*}
\|A+\lambda B\|_{(k)}&=& \||A|+\lambda U^*B\|_{(k)}\\
&\geq& \left|\sum_{i=1}^k \langle u_i, (|A|+ \lambda U^*B)u_i\rangle\right|\\
 &=& \left|\sum_{i=1}^k \langle u_i, |A| u_i\rangle+\lambda \sum_{i=1}^k \langle u_i, U^*B u_i\rangle\right|\\
&=& \sum_{i=1}^k s_i(A)\\
&=& \|A\|_{(k)}.
\end{eqnarray*} So $A$ is orthogonal to $B$ in $\|\cdot\|_{(k)}$.

Conversely, let $s_k(A)>0$ and $A$ is orthogonal to $B$ in $\|\cdot\|_{(k)}$. So 
$$\||A|+r e^{i\theta} U^* B\|_{(k)}\geq \|A\|_{(k)}\text{ for all }r,\theta \in \mathbb R  .$$
For $\theta\in \mathbb R$, let $B^{(\theta)}=e^{i\theta} B$. Then we get
$$\||A|+r U^* B^{(\theta)}\|_{(k)}\geq \|A\|_{(k)}\text{ for all }r \in \mathbb R  .$$
By Theorem \ref{thm 3.2}, there exist $k$ orthonormal vectors $u_1^{(\theta)},\ldots,u_k^{(\theta)}$ such that
$$|A|u_j^{(\theta)}=s_j(A) u_j^{(\theta)} \text{ for all } 1\leq j\leq k$$
and 
\begin{equation}
\h \sum_{j=1}^k \langle u_j^{(\theta)},U^*B^{(\theta)} u_j^{(\theta)}\rangle=0, \text{ that is}, \h e^{i \theta}\sum_{j=1}^k \langle u_j^{(\theta)},U^*B u_j^{(\theta)}\rangle=0. \label{complex} \end{equation}

Now by Lemma \ref{lemma 2.1.7}, the set $\mathcal W(U^*B,|A|)$
is  convex in $\C$. It is also compact in $\C$.  If $0\notin \mathcal W(U^*B,|A|)$, then by the Separating Hyperplane Theorem,  there exists a $\theta_0$ such that
$$\h e^{i\theta_0}\sum_{j=1}^k \langle u_j,U^*B u_j\rangle >0\text { for all } u_1,\ldots,u_k  \text { o.n., } |A| u_j= s_j(A) u_j \text{ for } 1\leq j\leq k .$$
This is a contradiction to \eqref{complex}. 
Thus $0\in \mathcal W(U^*B,|A|)$, and so there exist $k$ orthonormal vectors $u_1,\ldots,u_k$ such that 
$$
|A|u_i=s_i(A) u_i \text{ for all } 1\leq i\leq k$$
and 
$$
\sum_{i=1}^k \langle u_i,U^*B u_i\rangle=0. $$

{\it Proof of Theorem \ref{thm 2}} \quad
 Let $S, L:\mathbb C \rightarrow \mathbb M(n)$ and $g:\mat \rightarrow \R_+$  be the maps defined as $S(\lambda)=\lambda B$, $L(\lambda)=A+\lambda B$ and $g(X)=\|X\|_{(k)}$. Then we get $\|A+\lambda B\|_{(k)}\geq \|A\|_{(k)}\text{ for all }\lambda \in \C$ if and only if  $g\circ L$ attains its minimum at 0. By Proposition \ref{p2.3} and Proposition \ref{subdiff chain rule}, a necessary and sufficient condition for this is that $0\in S^* \partial g(A)$.  Let the matrices $U, V$ be partitioned as $U=[U_1: U_2:U_3]$ and $V=[V_1:V_2:V_3]$, where $U_1,V_1\in \mathbb M(n, k-q); U_2,V_2\in \mathbb M(n, r+q); U_3,V_3\in \mathbb M(n, n-k-r)$. If $ s_k(A)>0$, then by Theorem \ref{watson ky fan},  we get that $0\in S^* \partial g(A)$ if and only if there exists a positive semidefinite matrix $T\in \mathbb M(r+q) $ with $\lambda_1(T)\leq 1$ and $ \sum_{j=1}^{r+q} \lambda_j(T)=q $  such that $\tr B^*(U_1 V_1^*+U_2 T V_2^*)=0$. Similarly, when $ s_k(A)=0$, we get that $0\in S^* \partial g(A)$ if and only if there exists $T\in \mathbb M(n-k+q,r+q) $ with $s_1(T)\leq 1$ and $\sum_{j=1}^{r+q} s_j(T)\leq q$ such that $ \tr B^*(U_1 V_1^*+[U_2:U_3] T V_2^*)=0$. A calculation shows that 
$$\tr B^*(U_1 V_1^*+U_2 T V_2^*)= \tr B_{11}^*+\tr\left( B_{22}^* T\right)$$
and $$\tr B^*(U_1 V_1^*+[U_2:U_3] T V_2^*)= \tr B_{11}^*+\tr\left(\left[ B_{22}^* : B_{32}^* \right]T\right).$$ This gives the required result. 





\smallskip

{\it Proof of Theorem \ref{thm 3}} \quad First suppose that there exist density matrices $P_1,\ldots,P_k$ such that $\|\sum_{i=1}^k P_i\|_{\infty}\leq 1$, \begin{equation}
|A|P_i=s_i(A) P_i \quad \text{ for all } 1\leq i\leq k \label{*}
\end{equation} and $U \sum_{i=1}^k P_i \in \W^{\perp}$. Let $Q=\sum_{i=1}^k P_i$. Then $Q$ is a positive semidefinite matrix such that $\|Q\|_{\infty}\leq 1$, $\frac{1}{k}\|Q\|_{1}=\frac{1}{k} \sum_{i=1}^k \tr P_i=1$ and  \begin{equation}
\tr(W^*UQ)=0 \text{ for all }W\in \mathscr W.\label{**}
\end{equation}
 So by using \eqref{supnorm} and the fact that $\|X\|_{(k)}^*=\max\{\|X\|_{\infty}, \frac{1}{k}\|X\|_1\}$ \cite[Ex. IV.2.12]{bhatiama}., we get that for any $W\in \mathscr W$,
\begin{eqnarray*}
\|A+W\|_{(k)}&=&\||A|+U^*W\|_{(k)}\\
&\geq& \tr(|A|Q+U^*WQ)\\
&=&\tr(|A|Q) \qquad \text{ (by \eqref{**})}\\
&=&\sum_{i=1}^k \tr |A| P_i\\
&=& \|A\|_{(k)} \qquad \text{ (by \eqref{*})}.
\end{eqnarray*}
Conversely, suppose $A$ is orthogonal to $\mathscr W$ in $\|\cdot\|_{(k)}$ and $s_k(A)>0$.
Define $S:\mathscr W\rightarrow \mathbb M(n)$ as $S(W)=U^*W$. Then $S^*:\mathbb M(n)\rightarrow \mathscr W$ is given by $S^*(T)=\mathscr P_{\mathscr W}(UT)$, where $\mathscr P_{\mathscr W}$ is the orthogonal projection onto the subspace $\mathscr W$. Let $L: \mathscr W\rightarrow \mathbb M(n)$ be the map defined as $L(W)=|A|+U^*W$ and let $g:\mat \rightarrow \R_+$ be the map defined as $g(X)=\|X\|_{(k)}$. Then by Proposition \ref{p2.3} and Proposition \ref{subdiff chain rule}, we have that $\|A+W\|_{(k)}\geq \|A\|_{(k)}$ for all $W\in \mathscr W$ if and only if $0\in S^* \partial g(|A|)$. By Corollary \ref{cor 1.5.15},  there exist numbers $t_1,\ldots,t_m$ such that $0\leq t_j\leq 1$, $\sum_{j=1}^m t_j=1$ and for each $1\leq j\leq m$, there exist $k$ orthonormal vectors $u^{(j)}_{1},\ldots, u^{(j)}_k$ such that \begin{equation}
|A|u^{(j)}_i=s_i(A)u^{(j)}_i \text{ for all } 1\leq i\leq k \label{3.11}
\end{equation}  and 
\begin{equation}
S^*\left(\sum_{i=1}^k \sum_{j=1}^m t_j u^{(j)}_i u_i^{(j)*}\right)=0.\label{condition}
\end{equation} 
Let $P_i=\sum_{j=1}^m t_j u^{(j)}_i u_i^{(j)*}$. Then each $P_i$ is a density matrix. Also, by \eqref{3.11}, we get $|A|P_i=s_i(A)P_i$.
Equation \eqref{condition} says that $S^*(\sum_{i=1}^k P_i)=0$, which is equivalent to saying that $U\sum_{i=1}^k P_i\in \mathscr W^{\perp}$.
For each $1\leq j\leq m$, the matrix $\sum_{i=1}^k u^{(j)}_i u_i^{(j)*}$ is an orthogonal  projection of rank $k$ onto the linear span of $\{ u^{(j)}_i: 1\leq i\leq k\}$. In particular $\|\sum_{i=1}^k u^{(j)}_i u_i^{(j)*}\|_{\infty}\leq 1$. Thus 
\begin{eqnarray*}
\|\sum_{i=1}^k P_i\|_{\infty}&=&\| \sum_{j=1}^m t_j \sum_{i=1}^k u^{(j)}_i u_i^{(j)*}\|_{\infty}\\
&\leq & \sum_{j=1}^m t_j \|\sum_{i=1}^k u^{(j)}_i u_i^{(j)*}\|_{\infty}\\
&\leq& 1.
\end{eqnarray*}
\section{Remarks}\label{remarks}
\begin{enumerate}

\item \label{2.5}
Another necessary and sufficient condition for $A$ to be orthogonal to $B$  in $\|\cdot\|_1$ given  in \cite{lischneider} is that there exists a matrix $G\in \mat$ such that $\|G\|_{\infty}\leq 1$, $\tr(G^* A)=\|A\|_{1}$ and $\tr(G^* B)=0$. One can derive an analogous characterization for  orthogonality in $\|\cdot\|_{(k)}$ using \eqref{eq 1.4.1}. We can show that $A$ is orthogonal to $B$ in $\|\cdot\|_{(k)}$ if and only if there exists a matrix $G\in \mat$ such that $\|G\|_{\infty}\leq 1$, $\|G\|_1\leq k$, $ \tr(G^* A)=\|A\|_{(k)}$ and $ \tr(G^* B)=0$. Let $S, L, g$ be the maps as defined above in the proof of Theorem \ref{thm 2}. Then Proposition \ref{p2.3}, Proposition \ref{subdiff chain rule} and \eqref{eq 1.4.1} gives that 
$A$ is orthogonal to $B$ in $\|\cdot\|_{(k)}$ if and only if there exists a matrix $G\in \mat$ such that $\|G\|_{\infty}\leq 1$, $\|G\|_1\leq k$, $\h \tr(G^* A)=\|A\|_{(k)}$ and $ \tr(G^* B)=0$. We observe that if $\|G\|_{(k)}^*\leq 1$ then $\h \tr(G^* A)=\|A\|_{(k)}$ if and only if $\tr(G^* A)=\|A\|_{(k)}$. This is because if $\h \tr(G^* A)=\|A\|_{(k)}$, then $$\|A\|_{(k)}\leq |\tr(G^* A)|\leq \|G\|_{(k)}^*  \|A\|_{(k)}\leq \|A\|_{(k)}.$$ So ${\rm{Im}}\tr(G^* A)=0$ and hence $\tr(G^* A)=\h \tr(G^* A)=\|A\|_{(k)}$.
Thus we obtain the required result.

\item \label{3} The characterizations for Birkhoff-James orthogonality are closely related to the recent work in norm parallelism \cite{seddik,moslehianzamani1, moslehianzamani2}. In a normed linear space, an element $x$ is said to be \emph{norm-parallel} to another element $y$ (denoted as $x||y$) if $\|x+\lambda y\|=\|x\|+\|y\|$ for some $\lambda \in \C, |\lambda|=1$. Let $A=U|A|$ be a polar decomposition of $A$ and $s_k(A)>0$. Then by Theorem 2.4 in \cite{moslehianzamani2} and 
Theorem \ref{thm 1}, we get that $A || B$  in $\|\cdot\|_{(k)}$ if and only if there exists $\lambda \in \C$ with $|\lambda|=1$ and $k$ orthonormal vectors $u_1,u_2,\ldots,u_k$ such that 
$|A|\ u_i=s_i(A)u_i \text{ for all } 1\leq i\leq k$
and 
$\sum_{i=1}^k \langle u_i, U^* (\|B\|_{(k)} A+\lambda \|A\|_{(k)} B) u_i\rangle=0$.
Simplifying the expressions and using the fact that $|\lambda|=1$, we obtain that $A || B$  in $\|\cdot\|_{(k)}$ if and only if there exist $k$ orthonormal vectors $u_1,u_2,\ldots,u_k$ such that 
$|A|\ u_i=s_i(A)u_i \text{ for all } 1\leq i\leq k$ and
$|\sum_{i=1}^k \langle u_i,U^* B u_i\rangle|=\|B\|_{(k)}$. For $k=1$, this is just Corollary 2.15 of \cite{moslehianzamani2}.
\end{enumerate}

\textbf{Acknowledgement}
I would like to thank the referee for several valuable comments and suggestions.

\end{document}